\definecolor{bg}{rgb}{0.93,0.93,0.93}
\newtheorem{theorem}{Theorem}[section]
\newtheorem{proposition}[theorem]{Proposition}
\acrodef{pde}[PDE]{partial differential equation}
\acrodef{rhs}[RHS]{right-hand side}
\acrodef{fe}[FE]{finite element}
\acrodef{fem}[FEM]{finite element method}
\acrodef{fdm}[FDM]{finite difference method}
\acrodef{fvm}[FVM]{finite volume method}
\acrodef{dof}[DoF]{degree of freedom}
\acrodef{nn}[NN]{neural network}
\acrodef{cnn}[CNN]{convolutional \ac{nn}}
\acrodef{pinn}[PINN]{physics-informed \ac{nn}}
\acrodef{vpinn}[VPINN]{variational \ac{pinn}}
\acrodef{ivpinn}[IVPINN]{interpolated \ac{vpinn}}
\acrodef{feinn}[FEINN]{\ac{fe} interpolated \ac{nn}}
\acrodef{picnn}[PICNN]{physics-informed \ac{cnn}}
\acrodef{gmg}[GMG]{geometric multigrid}
\acrodef{spd}[SPD]{symmetric positive definite}
\acrodef{rt}[RT]{Raviart-Thomas}
\acrodef{dg}[DG]{discontinuous Galerkin}
\acrodef{cg}[CG]{continuous Galerkin}
\newcommand{\fig}[1]{Fig.~\ref{#1}}
\newcommand{\sect}[1]{Sect.~\ref{#1}}
\newcommand{\bdiv}{\textbf{div}}
\newcommand{\bcurl}{\textbf{curl}}
\newcommand{\norm}[1]{\left\lVert #1 \right\rVert}
\newcommand{\ltwonorm}[1]{\left\lVert #1 \right\rVert _{L^2(\Omega)}}
\newcommand{\honenorm}[1]{\left\lVert #1 \right\rVert _{H^1(\Omega)}}
\newcommand{\ltwonormd}[1]{\left\lVert #1 \right\rVert _{L^2(\Omega)^d}}
\newcommand{\hcurlnorm}[1]{\left\lVert #1 \right\rVert _{H(\bcurl; \Omega)}}
\newcommand{\hdivnorm}[1]{\left\lVert #1 \right\rVert _{H(\bdiv; \Omega)}}
\newcommand{\argmin}[1]{\underset{#1}{\mathrm{arg\,min}}\,}
\begin{document}

\title[Compatible finite element interpolated neural networks]{Compatible finite element interpolated neural networks}
\author{Santiago Badia$^1$}
\email{santiago.badia@monash.edu}
\author{Wei Li$^{1,*}$}
\email{wei.li@monash.edu}
\author{Alberto F. Mart\'{\i}n$^2$}
\email{alberto.f.martin@anu.edu.au}
\address{$^1$ School of Mathematics, Monash University, Clayton, Victoria 3800, Australia.}
\address{$^2$ School of Computing, The Australian National University, Canberra ACT 2600, Australia.}
\address{$^*$ Corresponding author.}

\date{\today}

\begin{abstract}
We extend the finite element interpolated neural network (FEINN) framework from partial differential equations (PDEs) with weak solutions in $H^1$ to PDEs with weak solutions in $H(\bcurl)$ or $H(\bdiv)$.
To this end, we consider interpolation trial spaces that satisfy the de Rham Hilbert subcomplex, providing stable and structure-preserving neural network discretisations for a wide variety of PDEs. 
This approach, coined compatible FEINNs, has been used to accurately approximate the $H(\bcurl)$ inner product. We numerically observe that the trained network outperforms  finite element solutions by several orders of magnitude for smooth analytical solutions. 
Furthermore, to showcase the versatility of the method, we demonstrate that compatible FEINNs achieve high accuracy in solving surface PDEs such as the Darcy equation on a sphere. Additionally, the framework can integrate adaptive mesh refinements to effectively solve problems with localised features. We use an adaptive training strategy to train the network on a sequence of progressively adapted meshes.
Finally, we compare compatible FEINNs with the adjoint neural network method for solving inverse problems. We consider a one-loop algorithm that trains the neural networks for unknowns and missing parameters using a loss function that includes PDE residual and data misfit terms. The algorithm is applied to identify space-varying physical parameters for the $H(\bcurl)$ model problem from partial, noisy,  or boundary observations. We find that compatible FEINNs achieve accuracy and robustness comparable to, if not exceeding, the adjoint method in these scenarios.
\end{abstract}

\keywords{neural networks, PINNs, compatible finite elements, PDE approximation, surface PDEs, inverse problems}

\maketitle

\section{Introduction} \label{sec:intro}
Conventional numerical discretisations of \acp{pde} rely on a partition of the domain (mesh) to approximate a continuous problem. The most widely used discretisation methods are the \ac{fem}, the \ac{fdm}, and the \ac{fvm}. Among these, \ac{fem} is very popular due to its flexibility to handle complex geometries, ability to achieve high-order accuracy, and suitability to tackle mixed formulations. Additionally, it is supported by a solid mathematical foundation~\cite{Ern2021}, ensuring stability and accuracy. 
Over the past few decades, optimal \ac{fem} solvers for both linear and nonlinear \acp{pde} have been developed, effectively exploiting the capabilities of large-scale supercomputers~\cite{Badia2016}.

Various physical phenomena in science and engineering can be modelled using \acp{pde}. 
As a classical example, the Poisson equation, with weak solution in $H^1$, describes phenomena such as heat conduction. The Maxwell's equations, with weak solution in $H(\bcurl)$, succinctly state the fundamentals of electricity and magnetism, while the Darcy equation, with weak flux and pressure solution in $H(\bdiv)$ and $L^2$ spaces, respectively, describes fluid flow through porous media. 
These spaces are connected through the differential operators grad, curl, and div, such that applying an operator to functions in one space maps surjectively onto the kernel of the next operator in the sequence, thereby forming a de Rham complex~\cite{Arnold2006}.

Compatible \acp{fe} are structure-preserving discretisation methods for solving these equations. They search for approximate solutions in subspaces that, at the discrete level, preserve the same relation via differential operators as their continuous, infinite-dimensional counterparts, thus forming a discrete de Rahm complex.
A common property of these complexes is the gradual reduction of inter-element continuity requirements from one space to the next: the discrete subspaces are made of piecewise polynomials that are continuous across cell boundaries in the case of $H^1$ (thus having a well defined global weak gradient), only tangential or only normal components are continuous for $H(\bcurl)$ and $H(\bdiv)$, respectively (thus having well-defined global weak curl and div, respectively), and completely discontinuous across cell boundaries for $L^2$. 
Although quite established in several application areas, compatible \acp{fe} have become increasingly popular, e.g., for the simulation of geophysical flows in the context of atmospheric and ocean modelling, mostly because their ability to effectively address the issue of spurious numerical waves present in other numerical schemes, while allowing for conservation of energy and other quantities; see~\cite{Cotter2023} for a recent survey. 

For a given polytope (e.g., a triangle or a quadrilateral) and polynomial order, there are several possible choices for the discrete subspaces that lead to the above discrete complex structure.
In this work we leverage first-kind N{\'e}d{\'e}lec~\cite{Nedelec1980} and \ac{rt}~\cite{Raviart1977} vector-valued \acp{fe} for $H(\bcurl)$ and $H(\bdiv)$ spaces, respectively (apart from the more standard continuous and discontinuous Lagrangian nodal \acp{fe} for $H^1$ and $L^2$ spaces, respectively). These \acp{fe} define the local polynomial bases and \acp{dof} in the form of integrals (moments) over mesh edges, faces, and cells so as to ensure the above mentioned continuity constraints across element boundaries. 
We note that N{\'e}d{\'e}lec \acp{fe} are particularly well-suited for Maxwell's equations as, in contrast to nodal \acp{fe}, they avoid spurious solutions, even in the case of domains with re-entrant corners or edges~\cite{Costabel2002}. They can accurately approximate discontinuous fields due to large jumps in the physical properties of the material, and are better understood mathematically than other discretisation methods for the Maxwell's equations~\cite{Monk2019}. 

Deep learning techniques, especially \acp{nn}, have gained significant popularity over the last few years for solving \acp{pde}. The main idea is that one seeks an approximation to the solution of a \ac{pde} from a trial finite-dimensional {\em nonlinear manifold} (e.g., a neural network) as opposed to a finite-dimensional linear space as in \ac{fem}. One of the most notable methods is \acp{pinn}~\cite{Raissi2019}. Instead of solving algebraic systems of equations to find an approximate solution, \acp{pinn} minimise the strong \ac{pde} residual evaluated at a set of randomly sampled collocation points. 
\Acp{pinn} have demonstrated relative success for forward (where only the solution/state is unknown) and inverse (where incomplete data, e.g., physical coefficients, is supplemented with observations of the state) \ac{pde}-constrained problems (see, e.g.,~\cite{Pang2019,Yang2021}).

The \ac{vpinn} method~\cite{Kharazmi2021} utilises a loss function based on the variational or weak form of the \ac{pde} in order to weaken the regularity requirements on the solution.  \Acp{vpinn} support $h$-refinement through domain decomposition and $p$-refinement via projection onto higher-order polynomial spaces. One of the issues in \acp{vpinn} and \acp{pinn} is the difficulties associated to the accurate integration of \acp{nn} (and their derivatives)~\cite{Magueresse2024}. To address the integration challenge in \acp{vpinn}, the \ac{ivpinn} method~\cite{Berrone2022} proposes using polynomial interpolations of \acp{nn} as trial functions in the variational formulation. \Acp{ivpinn} reduce computational costs compared to \acp{vpinn} and demonstrate significantly higher accuracy, especially when the solution is singular. Despite the improvements, these variational methods make use of the Euclidean norm of the discrete residual, which is not an appropriate measure of the error, since the residual is a functional in the dual space. As a result, the analysis for \acp{ivpinn} is sub-optimal.  

Another challenge in \ac{pinn}-based methods is the imposition of essential boundary conditions~\cite{Sukumar2022}. These methods either impose the boundary conditions weakly through a penalty term~\cite{Raissi2019} or Nitsche's method~\cite{Magueresse2024}, or rely on a combination of two functions: an offset function that satisfies the boundary conditions and a distance function that vanishes at the boundary~\cite{Berrone2022}. 

On the other hand, the so-called \ac{feinn} method, proposed in~\cite{Badia2024}, aims to find a function among all possible realisations of the nonlinear \ac{nn} manifold whose interpolation onto a trial \ac{fe} space minimises a discrete dual norm, over a suitable test \ac{fe} space, of the weak residual functional associated to the \ac{pde}.
The \ac{feinn} method imposes essential boundary conditions more naturally at the \ac{fe} space level by interpolating the \acp{nn} onto a \ac{fe} space that (approximately) satisfies the Dirichlet boundary conditions. The integration of the loss function can efficiently be handled using a Gaussian quadrature, as in \ac{fem}. 
The dual norm of the residual is an accurate measure of the error and the analysis in~\cite{Badia2024} shows that the interpolation of the \acp{nn} onto the \ac{fe} space is a stable and accurate approximation of the solution. We refer to~\cite{Rojas2024-it} for the use of the dual norm of the residual in \acp{vpinn}.

\Acp{feinn} have demonstrated exceptional performance over \ac{fem} when the target solution is smooth: the trained \acp{nn} outperform \ac{fem} solutions by several orders of magnitude in terms of $L^2$ and $H^1$ errors, even with complex geometries. With minimal modifications, \acp{feinn} can also be extended to solve inverse problems and exhibit comparable performance to the adjoint-based \ac{nn} approach~\cite{Badia2024}.
Besides, when combined with adaptive meshing techniques, $h$-adaptive \acp{feinn}~\cite{Badia2024adaptive} can efficiently solve \acp{pde} featuring sharp gradients and singularities while unlocking the nonlinear approximation power of discrete \ac{nn} manifold spaces. The trained \acp{nn} show potential to achieve higher accuracy than \ac{fem}, particularly when the solution is not singular~\cite{Badia2024adaptive}.

Most existing \ac{nn} discretisation \ac{pde} solvers are developed for problems with weak solutions in $H^1$, such as the Poisson equation~\cite{Magueresse2024,Badia2024adaptive}, or in $H^1 \times L^2$, such as the Navier-Stokes equations~\cite{Cai2021,Cheng2021,Pichi2023}. However, there are limited studies attacking problems with weak solutions in $H(\bcurl)$ or $H(\bdiv)$.
In~\cite{Baldan2021}, the authors employ \acp{nn} to solve 1D nonlinear magneto quasi-static Maxwell's equations in frequency or time domains.
Similar to \acp{pinn}, MaxwellNet~\cite{Lim2022} trains a \ac{cnn} using the strong \ac{pde} residual of the Maxwell's equations as the loss function. The authors in~\cite{Baldan2023} address inverse Maxwell's problems by integrating a hypernetwork into the \ac{pinn} framework. This addition enables the trained hypernetwork to act as a parametrised real-time field solver, allowing rapid solutions to inverse electromagnetic problems.

There are even fewer works on \ac{nn}-based solvers for the Darcy equation. 
The \ac{picnn} method proposed in~\cite{Zhang2023} uses a \ac{cnn} to simulate transient two-phase Darcy flows in homogeneous and heterogeneous reservoirs. To ensure flux continuity, they adopt a \ac{fvm} to approximate the PDE residual in the loss function. 
In~\cite{He2020}, the authors employ multiple \acp{nn} to approximate both the unknown parameters and states of the inverse Darcy systems.  Their study shows that \acp{pinn} offer regularisation and decrease the uncertainty in \ac{nn} predictions.

Another interesting topic is the development of \ac{nn} solvers for \acp{pde} posed over immersed manifolds, e.g., the Darcy equation defined on a sphere. There are a few works on \acp{pinn} for surface \acp{pde} in the literature.
In~\cite{Fang2020}, the authors extend \acp{pinn} to solve the Laplace-Beltrami equation on 3D surfaces.
The authors in~\cite{Hu2024} utilise a single-hidden-layer \acp{pinn} to solve the Laplace-Beltrami and time-dependent diffusion equations on static surfaces, as well as advection-diffusion equations on evolving surfaces. 
Another related work is~\cite{Bihlo2022}, where the authors apply \acp{pinn} to solve the shallow-water equations on the sphere.

In this work, we integrate compatible \acp{fe}, i.e., spaces that form a discrete the Rham complex~\cite{Arnold2006}, into the \ac{feinn} method proposed in~\cite{Badia2024}. This integration enables us to solve \acp{pde} with weak solutions in $H(\bcurl)$ or $H(\bdiv)$. We refer to this method as compatible \acp{feinn}. 
The novelty of compatible \acp{feinn} over the standard \ac{feinn} method in~\cite{Badia2024} lies in introducing curl/div-conforming trial \ac{fe} spaces to interpolate \acp{nn}, ensuring the desired structure-preserving properties across element boundaries. The residual minimisation framework underlying \acp{feinn} allows for some flexibility in selecting the trial and test \ac{fe} spaces provided that an inf-sup compatibility condition is satisfied among them; see~\cite{Badia2024adaptive} for the numerical analysis of the method. 
As we showed in~\cite{Badia2024} and further confirmed in this paper for the problems at hand, a proper choice of these spaces can lead to significant improvements in the accuracy of the solution and the convergence of the optimiser. 
As an evidence on the soundness of this approach, we also showcase its applicability to solve \acp{pde} posed over immersed manifolds. We interpolate \ac{nn} vector-valued fields living in ambient space (and thus not necessarily tangent to the manifold) onto \ac{fe} spaces made of functions constrained to be tangent to the manifold by construction. Strikingly, \acp{nn} trained  on a coarse mesh and lower-order bases are able to provide several orders of magnitude higher accurate solutions to the problem when interpolated onto a \ac{fe} space built out of finer meshes and higher order bases. Besides, this method, which resembles TraceFEM~\cite{Olshanskii2017} (as it seeks an approximation of the surface \ac{pde} on a finite-dimensional space of functions living on a higher dimensional space), does not need stabilisation terms in the loss function to enforce tangentiality, as it relies on a surface \ac{fe} interpolation on the tangent space. A comprehensive set of numerical experiments is conducted to demonstrate the performance of compatible \acp{feinn} in forward and inverse problems involving \acp{pde} in $H(\bcurl)$ and $H(\bdiv)$.

The rest of the paper is organised as follows. 
In \sect{sec:method}, we introduce model problems for the Maxwell's equations and Darcy equation considered in this article, their compatible \ac{fe} discretisation, and the compatible \ac{feinn} method in both forward and inverse scenarios.
\sect{sec:experiments} starts with a brief discussion on the implementation, and then presents the numerical results of the forward and inverse experiments.
Finally, we conclude the paper in \sect{sec:conclusions}.

\section{Methodology} \label{sec:method}

\subsection{Notation} \label{sec:method:notation}
In this section we introduce some essential mathematical notation that will be required for the rest of the paper. Let $\Omega \subset \mathbb{R}^d$ be a Lipschitz polyhedral domain, where $d \in \{2, 3\}$ is the dimension of the space where $\Omega$ lives. We denote the boundary of $\Omega$ by $\partial \Omega$, the Dirichlet boundary by $\Gamma_D \subset \partial \Omega$, and the Neumann boundary by $\Gamma_N \subset \partial \Omega$. Note that $\Gamma_D \cup \Gamma_N = \partial \Omega$ and $\Gamma_D \cap \Gamma_N = \emptyset$.
We consider a scalar-valued function $p: \Omega \rightarrow \mathbb{R}$, a vector-valued function $\mathbf{u}: \Omega \rightarrow \mathbb{R}^d$, and the standard square-integrable spaces $L^2(\Omega)$ for scalar functions and $L^2(\Omega)^d$ for vector functions. The corresponding norms are denoted by $\ltwonorm{\cdot}$ and $\ltwonormd{\cdot}$, respectively.

In three dimensions ($d=3$), we have the following spaces:
\begin{align*}
    H^1(\Omega) &= \{ p \in L^2(\Omega): \pmb{\nabla} p \in L^2(\Omega)^d \}, \\
    H(\bcurl; \Omega) &= \{ \mathbf{u} \in L^2(\Omega)^d: \pmb{\nabla} \times \mathbf{u} \in L^2(\Omega)^d \}, \\
    H(\bdiv; \Omega) &= \{ \mathbf{u} \in L^2(\Omega)^d: \pmb{\nabla} \cdot \mathbf{u} \in L^2(\Omega) \},
\end{align*}
where $\pmb{\nabla}$, $\pmb{\nabla} \times$, $\pmb{\nabla} \cdot$ denote the weak gradient, curl, and divergence differential operators, respectively. Detailed definitions of these spaces and operators can be found in, e.g.,~\cite{Ern2021}. We equip these spaces with the following norms:
\begin{align*}
    \honenorm{p} &= \left(\ltwonorm{p}^2 + \ltwonormd{\pmb{\nabla} p}^2\right)^{1/2}, \\
    \hcurlnorm{\mathbf{u}} &= \left( \ltwonormd{\mathbf{u}}^2 + \ltwonormd{\pmb{\nabla} \times \mathbf{u}}^2 \right)^{1/2}, \\
    \hdivnorm{\mathbf{u}} &= \left( \ltwonormd{\mathbf{u}}^2 + \ltwonorm{\pmb{\nabla} \cdot \mathbf{u}}^2 \right)^{1/2}.
\end{align*}

In two dimensions ($d=2$), the curl operator is defined as the divergence of a 90 degrees counter-clockwise rotation of the input vector, and thus produces scalar functions (as opposed to vector-valued functions for $d=3$). Thus, for the curl case, we have for $d=2$:
\begin{equation*}
    H(\bcurl; \Omega) = \{ \mathbf{u} \in L^2(\Omega)^2: \pmb{\nabla} \times \mathbf{u} \in L^2(\Omega) \}, \quad \hcurlnorm{\mathbf{u}} = \left( \ltwonormd{\mathbf{u}} + \ltwonorm{\pmb{\nabla} \times \mathbf{u}} \right)^{1/2}.
\end{equation*}

\subsection{Continuous problems} \label{sec:method:continuous}
As model problems, we consider two types of \acp{pde}: the inner product in $H(\bcurl)$ on a $d$-dimensional domain (as a simplified model problem for the Maxwell's equations) and the Darcy equation defined on a 2-dimensional closed surface (i.e., manifold) embedded in 3-dimensional space.
In the following, we denote the $H(\bcurl)$ problem as the Maxwell problem for brevity, even though it is a simplification of the full Maxwell's system. 

\subsubsection{Maxwell's equations} \label{sec:method:continuous:maxwell}

As a model problem for the Maxwell's equations, we consider the $H(\bcurl)$-inner product problem: find the field $\mathbf{u}: \Omega \rightarrow \mathbb{R}^d$ such that
\begin{equation} \label{eq:maxwell_strong}
    \pmb{\nabla} \times \pmb{\nabla} \times \mathbf{u} + \kappa \mathbf{u} = \mathbf{f} \text{ in } \Omega, \quad \mathbf{u} \times \mathbf{n} = \mathbf{g} \text{ on } \Gamma_D = \partial \Omega,
\end{equation}
where $\kappa$ is a scalar-valued material parameter field, $\mathbf{f}$ is a source term, $\mathbf{n}$ is the outward unit normal vector to $\partial \Omega$, and $\mathbf{g}$ is the Dirichlet data. Note that, for simplicity and without loss of generality, we consider pure Dirichlet boundary conditions (i.e., $\Gamma_N = \emptyset$). Imposition of Neumann boundary conditions is straightforward and can be included in the formulation by a simple modification of the \ac{rhs}.

Let $U^1 \doteq H(\bcurl; \Omega)$ and its subspace $U^1_0 \doteq \{ \mathbf{v} \in H(\bcurl; \Omega): \mathbf{v} \times \mathbf{n} = 0 \text{ on } \partial \Omega \}$. Consider the bilinear and linear forms: 
\begin{equation*}
    a(\mathbf{u}, \mathbf{v}) = \int_{\Omega} (\pmb{\nabla} \times \mathbf{u}) \cdot (\pmb{\nabla} \times \mathbf{v}) + \kappa \mathbf{u} \cdot \mathbf{v}, \quad \ell(\mathbf{v}) = \int_{\Omega} \mathbf{f} \cdot \mathbf{v}.
\end{equation*}
The weak form of~\eqref{eq:maxwell_strong} reads: find the solution $\mathbf{u} + \bar{\mathbf{u}} \in U^1$, with $\mathbf{u} \in U^1_0$ such that
\begin{equation} \label{eq:maxwell_weak}
    a(\mathbf{u}, \mathbf{v}) = \ell(\mathbf{v}) - a(\bar{\mathbf{u}}, \mathbf{v}), \quad \forall \mathbf{v} \in U_0^1,
\end{equation}
and $\bar{\mathbf{u}}$ an offset (a.k.a. lifting) function satisfying the Dirichlet boundary conditions. The well-posedness of this problem can be proven invoking the 
Lax-Milgram lemma~\cite{Ern2021b}. We also define the weak \ac{pde} residual functional for the problem as
\begin{equation}
    \mathcal{R}(\mathbf{u}) = a(\mathbf{u} + \bar{\mathbf{u}}, \cdot) - \ell(\cdot) \in {U_0^1}',
\end{equation} 
where ${U_0^1}'$ denotes the dual space of $U_0^1$.

\subsubsection{Darcy equation on a closed surface} \label{sec:method:continuous:darcy}

The strong form of the Darcy equations defined on a closed two-dimensional manifold $S$ embedded in three dimensions read: find the flux $\mathbf{u}: S \rightarrow \mathbb{R}^3$ and the pressure $p: S \rightarrow \mathbb{R}$ such that
\begin{equation} \label{eq:darcy_strong}
    \pmb{\nabla}_S \cdot \mathbf{u} = f \text{ in } S, \quad \mathbf{u} = -\pmb{\nabla}_S p \text{ in } S,
\end{equation}
where $\pmb{\nabla}_S \cdot$ and $\pmb{\nabla}_S$ denote the divergence and gradient operators on the manifold $S$, respectively. The vector-valued fields $\mathbf{u}$ and $\pmb{\nabla}_S p$ are, by definition, constrained to be tangent to $S$. Note that boundary conditions are not part of the system due to the closed nature of $S$.

We define the spaces $U^2 \doteq H(\bdiv; S)$, $U^3 \doteq L^2(S)$ and $\tilde{U}^3 \doteq L^2_0(S) \doteq \{ q \in L^2(S): \int_{S} q = 0\}$. We denote $(\cdot, \cdot)$ as the $L^2(S)$ inner product, i.e., $(p, q) = \int_{S} p q$.
The weak form of~\eqref{eq:darcy_strong} reads: find $(\mathbf{u}, p) \in U^2 \times \tilde{U}^3$ such that
\begin{equation} \label{eq:darcy_weak}
    (\mathbf{u}, \mathbf{v}) - (p, \pmb{\nabla}_S \cdot \mathbf{v}) = 0 \quad \forall \mathbf{v} \in U^2, \qquad
    (\pmb{\nabla}_S \cdot \mathbf{u}, q) = (f, q) \quad \forall q \in \tilde{U}^3,
\end{equation}
or, equivalently, the mixed formulation:
\begin{equation} \label{eq:darcy_weak2}
    \mathcal{A}((\mathbf{u}, \mathbf{v}), (p, q)) = \mathcal{L}((\mathbf{v}, q)) \quad \forall (\mathbf{v}, q) \in U^2 \times \tilde{U}^3,
\end{equation}
with the mixed forms defined as
\begin{equation*}
    \mathcal{A}((\mathbf{u}, \mathbf{v}), (p, q)) \doteq (\mathbf{u}, \mathbf{v}) - (p, \pmb{\nabla}_S \cdot \mathbf{v}) + (\pmb{\nabla}_S \cdot \mathbf{u}, q), \quad \mathcal{L}((\mathbf{v}, q)) \doteq (f, q).    
\end{equation*}
We can also define the weak \ac{pde} residual for this problem as
\begin{equation}
    \mathcal{R}(\mathbf{u}, p) = \mathcal{A}((\mathbf{u}, p), (\cdot,\cdot)) - \mathcal{L}((\cdot,\cdot)) \in {U^2}' \times \tilde{U}^{3'},
\end{equation}
where ${U^2}'$ and $\tilde{U}^{3'}$ are the dual spaces of $U^2$ and $\tilde{U}^3$, respectively. The well-posedness of this problem can be proven invoking the Babu\v{s}ka-Brezzi theory~\cite{Ern2021b}.

\subsection{Finite element discretisation} \label{sec:method:compatible}

\subsubsection{Discrete de Rham complexes} \label{sec:method:comfeinn:rham}
Compatible \ac{fe} spaces form a discrete differential complex, namely a de Rham complex~\cite{Ern2021,Arnold2006}. These complexes comprise a sequence of spaces related by differential operators. 
For $\Omega \subset \mathbb{R}^3$, it reads as follows:
\begin{equation} \label{eq:de_rham3d}
    \begin{CD}
      U^0 = H^1(\Omega) @> \text{d}^1 = \pmb{\nabla}  >>
      U^1 = H(\bcurl; \Omega) @> \text{d}^2 = \pmb{\nabla} \times >>
      U^2 = H(\bdiv; \Omega) @> \text{d}^3 = \pmb{\nabla} \cdot >> 
      U^3 = L^2(\Omega) \\
      @VV{\pi_h^0}V @VV{\pi_h^1}V @VV{\pi_h^2}V @VV{\pi_h^3}V \\
      U_h^0 @> \text{d}^1 = \pmb{\nabla} >> 
      U_h^1 @> \text{d}^2 = \pmb{\nabla} \times >> 
      {U_h^2} @> \text{d}^3= \pmb{\nabla} \cdot >> 
      {U_h^3}. \\
    \end{CD}
\end{equation}
In the top row, we have the continuous spaces, and in the bottom row, we have the corresponding discrete subspaces, i.e., $U_h^i \subset U^i$. 
The differential operators $\text{d}^i$ map between the continuous spaces in such a way that the image  of $\text{d}^i$ is the kernel of $\text{d}^{i+1}$, i.e., $\text{d}^{i+1} \circ \text{d}^{i} = 0$ for $i \in \{1, 2\}$.
This very same relation is preserved for the discrete spaces in the bottom row, forming a discrete de Rham complex.
Besides, using suitable interpolation operators $\pi_h^i$, $i \in \{0, 1, 2, 3\}$, both complexes are connected by commutative relations $\text{d}^{i}\pi_h^{i-1} u = \pi_h^{i} \text{d}^{i} u, i \in \{1, 2, 3\}$.
For $d=2$, rotating any vector field in $H(\bdiv; \Omega)$ by $\pi/2$ radians counter clock-wise results in a vector field in $H(\bcurl; \Omega)$. Therefore, the de Rham complex in two dimensions can be simplified by removing $U^1$, $\pi_h^1$, $U_h^1$, and $\text{d}^2$ from ~\eqref{eq:de_rham3d} and applying a 90-degree counter-clockwise rotation to the output of the gradient operator. 

\subsubsection{Edge and face finite elements} \label{sec:method:comfeinn:edgeface}

In order to build $U_h^1$ and ${U_h^2}$ (see~\eqref{eq:de_rham3d}) we leverage the N{\'e}d{\'e}lec edge elements of the first kind~\cite{Nedelec1980} and the \ac{rt} face elements~\cite{Raviart1977}, respectively,  out of the different options available. These \acp{fe} are well-established in the literature. We thus refer the reader to, e.g.,~\cite{Olm2019,Ern2021} for a detailed definition of their building blocks. Let $\mathcal{T}_h$ be a shape-regular partition of $\Omega$ with element size $h>0$. Let $K$ be an arbitrary cell of $\mathcal{T}_h$. For $k\geq 1$, we denote by $\mathbf{RT}_k(K)$ and $\mathbf{ND}_{k}(K)$ the \ac{rt} and N{\'e}d{\'e}lec elements of the first kind of order $k$, respectively, on cell $K$. We have that:
\begin{align*}
  U_h^1 &:= \{\mathbf{u}_h\in H(\bcurl; \Omega): \mathbf{u}_h|_K \in \mathbf{ND}_{k}(K),\quad \forall K\in \mathcal{T}_h\}, \\
  {U_h^2} &:= \{\mathbf{x}_h \in H(\bdiv; \Omega): \mathbf{x}_h|_K \in \mathbf{RT}_k(K),\quad \forall K\in \mathcal{T}_h\},
\end{align*}
with ${\pi_h^1}$ and ${\pi_h^2}$ in~\eqref{eq:de_rham3d} being the global N{\'e}d{\'e}lec and \ac{rt} interpolators~\cite{Ern2021}. For example, for $k=1$ (i.e., lowest order), these interpolation operators involve the evaluation of \acp{dof} defined as integrals over global mesh faces and edges of the normal and tangential components, respectively, of the vector field to be interpolated into the global \ac{fe} space. 

\subsubsection{Linearised test spaces} \label{sec:method:comfeinn:lin}

Following the observations in \cite{Badia2024adaptive} for grad-conforming problems, we consider a Petrov-Galerkin discretisation method for the \ac{fe} discretisation of the curl and div-conforming problems being considered in this work.
We define $V_h^i$ as the \emph{linearised}  test space corresponding to the trial space $U_{h}^i$. In the following, we discuss the construction of this linearised test space $V_h^1$ for the Maxwell problem and $V_h^2$ for the Darcy problem. 

Let $k_U$ denote the order of $U_h^i$, and let $\mathcal{T}_{h/k_U}$ represent the mesh obtained by $k_U-1$ uniform refinements of $\mathcal{T}_h$.
The test space ${V_h^i}$ is constructed out of the lowest order elements on the refined mesh $\mathcal{T}_{h/k_U}$. For example, $V_h^1$ and $V_h^2$ are built out of the lowest order N{\'e}d{\'e}lec and \ac{rt} elements, respectively, on $\mathcal{T}_{h/k_U}$. For this reason, we refer to ${V_h^i}$ as a \emph{linearised} test \ac{fe} space. In the following propositions, we show that the dimensions of $U_h^i$ and ${V_h^i}$ are equal for $i\in\{1,2\}$ in 2D and 3D, but the proof can readily be extended to any dimension and other compatible spaces~\cite{Arnold2019-hi}. The grad-conforming linearised space was studied in~\cite{Badia2024}. The result straightforwardly holds for $U_h^3$.  

\begin{proposition} \label{prop:nedelec-dim}
  The trial space $U_h^1$ and the linearised test space ${V_h^1}$ for quadrilateral and hexahedral N{\'e}d{\'e}lec elements of the first kind have the same number of \acp{dof} per geometrical entity (edge, face, cell) on the mesh $\mathcal{T}_h$ and as a result the same dimensions.
\end{proposition}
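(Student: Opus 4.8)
The plan is to prove the claim by a direct \ac{dof} count, carried out entity by entity and then summed; no stability or approximation estimate is needed. First I would recall the standard distribution of \acp{dof} of the first-kind N{\'e}d{\'e}lec element of order $k \doteq k_U$ on tensor-product cells (see, e.g.,~\cite{Nedelec1980,Olm2019,Ern2021}): on a quadrilateral, $k$ \acp{dof} per edge and $2k(k-1)$ interior (cell) \acp{dof}, giving local dimension $2k(k+1)$; on a hexahedron, $k$ \acp{dof} per edge, $2k(k-1)$ per face, and $3k(k-1)^2$ per cell, giving local dimension $3k(k+1)^2$. For the lowest-order element $\mathbf{ND}_1$ this degenerates to exactly one \ac{dof} per edge and none on vertices, faces, or cells; hence $\dim V_h^1$ equals the number of edges of $\mathcal{T}_{h/k_U}$, and the \acp{dof} of $V_h^1$ are naturally grouped by associating each fine edge to the unique lowest-dimensional entity (edge, face, or cell) of $\mathcal{T}_h$ containing it — this assignment is well defined because the skeleton of $\mathcal{T}_h$ is contained in that of $\mathcal{T}_{h/k_U}$.

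The heart of the argument is then to count, for each coarse entity, how many sub-edges of $\mathcal{T}_{h/k_U}$ it carries, using that $\mathcal{T}_{h/k_U}$ subdivides each coarse edge into $k$ collinear sub-edges, each coarse face into a $k\times k$ array of sub-faces, and each coarse cell into a $k\times k\times k$ array of sub-cells. A coarse edge contains $k$ sub-edges, matching the $k$ edge \acp{dof} of $\mathbf{ND}_k$. A coarse quadrilateral face contains, in its relative interior, $k(k-1)$ sub-edges in each of the two edge directions, i.e. $2k(k-1)$ in total, matching the face \acp{dof} of $\mathbf{ND}_k$ (and, in 2D, its cell \acp{dof}). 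A coarse hexahedral cell contains, strictly in its interior, $k(k-1)^2$ sub-edges in each of the three axis directions, i.e. $3k(k-1)^2$ in total, matching the cell \acp{dof} of $\mathbf{ND}_k$. Since no sub-edge is left out and vertices carry no $\mathbf{ND}_1$ \acp{dof}, the per-entity counts coincide for every entity type; summing over all edges, faces, and cells of $\mathcal{T}_h$ then gives $\dim U_h^1 = \dim V_h^1$. In 2D the same argument applies verbatim with ``face'' and ``cell'' identified, which also settles the $d=2$ case.

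The cleanest way to organise this count — and the route that makes the ``extends to any dimension and other compatible spaces~\cite{Arnold2019-hi}'' remark transparent — is to exploit that both the N{\'e}d{\'e}lec family on cubes and the $k$-fold uniform refinement are tensor products of one-dimensional objects. The relevant 1D facts are: $\dim \mathbb{P}_k([0,1]) = k+1$ equals the number of vertices of the $k$-times subdivided interval, split as $2$ endpoint plus $k-1$ interior; and $\dim \mathbb{P}_{k-1}([0,1]) = k$ equals the number of its sub-intervals, all of ``cell'' type. Taking the $d$-fold tensor product of the 1D complex $\mathbb{P}_k \to \mathbb{P}_{k-1}$ builds the cubical de Rham complex, the refinement acts factor-wise, and the \ac{dof}--entity assignment is multiplicative, so the 1D matchings propagate to every space in the complex, $U_h^1$ among them. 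I expect the only real obstacle to be bookkeeping — keeping the ``order $k$'' convention for $\mathbf{ND}_k$ in sync with the $k$-fold refinement, and getting the interior sub-edge counts on faces and cells right; there is no genuine analytical difficulty, since the statement is purely dimensional.
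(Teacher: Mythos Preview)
Your proposal is correct and follows essentially the same route as the paper: a direct entity-by-entity count that matches the $k$ edge \acp{dof}, $2k(k-1)$ face \acp{dof}, and $3k(k-1)^2$ cell \acp{dof} of $\mathbf{ND}_k$ against the number of sub-edges of $\mathcal{T}_{h/k_U}$ lying on each coarse edge, face, and cell, using that $\mathbf{ND}_1$ has exactly one \ac{dof} per edge. Your additional tensor-product reformulation, reducing everything to the 1D identities $\dim\mathbb{P}_k=k+1$ and $\dim\mathbb{P}_{k-1}=k$, is not in the paper's proof but is precisely what makes the ``extends to any dimension and other compatible spaces'' remark following the proposition evident; it is a welcome clarification rather than a different argument.
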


\begin{proof}
  Each quadrilateral N{\'e}d{\'e}lec element of order $k_U$ has a total of $2 k_U(k_U + 1)$ \acp{dof}: $4 k_U$ on the boundary edges (with $k_U$ \acp{dof} per edge) and $2 k_U (k_U - 1)$ in the interior.
  After applying $k_U - 1$ uniform refinements, the refined element contains $4 k_U$ boundary subedges and $2 k_U (k_U - 1)$ interior subedges. 
  Since the lowest order N{\'e}d{\'e}lec element has one \ac{dof} per edge, the linearised element has $4 k_U$ boundary \acp{dof} and $2 k_U (k_U - 1)$ interior \acp{dof}.
  Therefore, the \acp{dof} per geometrical entity and dimensions of $U_h^1$ and ${V_h^1}$ are equal for quadrilateral N{\'e}d{\'e}lec elements.

  Each hexahedral N{\'e}d{\'e}lec element of order $k_U$ has a total of $3 k_U (k_U + 1)^2$ \acp{dof}: $12 k_U$ on the edges (with $k_U$ \acp{dof} per edge), $6(2 k_U^2 - 2k_U)$ on the faces (with $2 k_U^2 - 2k_U$ \acp{dof} per face), and $3 k_U (k_U - 1)^2$ in the interior. 
  After applying $k_U - 1$ uniform refinements, the refined element has $k_U$ subedges on each of the original edges, $2 k_U^2 - 2 k_U$ subedges on each of the original faces, and $3 k_U (k_U - 1)^2$ interior subedges. 
  The linearised N{\'e}d{\'e}lec element also has $12 k_U$ edge \acp{dof}, $6(2 k_U^2 - 2k_U)$ face \acp{dof}, and $3 k_U (k_U - 1)^2$ interior \acp{dof}. 
  Thus, the \acp{dof} per geometrical entity and dimensions of $U_h^1$ and ${V_h^1}$ are also equal for hexahedral N{\'e}d{\'e}lec elements.
\end{proof}

\begin{proposition} \label{prop:rt-dim}
  The trial space ${U_h^2}$ and the linearised test space ${V_h^2}$ for quadrilateral and hexahedral \ac{rt} elements have the same number of \ac{dof}  per geometrical entity (edge, face, cell) on the mesh $\mathcal{T}_h$ and as a result the same dimensions.
\end{proposition}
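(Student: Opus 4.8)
The plan is to follow verbatim the strategy of the proof of Proposition~\ref{prop:nedelec-dim}, exploiting the tensor-product structure of RT spaces on quadrilateral and hexahedral cells. On a tensor-product cell the RT element of order $k_U$ has \acp{dof} that are either moments of the normal component against a tensor-product polynomial space on each codimension-one face, or interior moments; in particular, no \acp{dof} are attached to lower-dimensional entities. Accordingly, I would (i) tabulate, for a single macro-cell $K$, the number of \acp{dof} of $\mathbf{RT}_{k_U}(K)$ carried by each geometrical entity; (ii) tabulate the number of \acp{dof} of the lowest-order RT space on the submesh of $K$ obtained by $k_U-1$ uniform refinements, grouped by the macro-entity (edge, face, cell) of $K$ containing the corresponding sub-entity; and (iii) check that these tabulations agree entity by entity, whence the global dimensions agree after summing the per-entity counts over $\mathcal{T}_h$.

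For quadrilaterals ($d=2$), I would use that $\mathbf{RT}_{k_U}(K)$ has $2k_U(k_U+1)$ \acp{dof} in total, namely $k_U$ normal-moment \acp{dof} on each of the four edges (hence $4k_U$ edge \acp{dof}) and the remaining $2k_U(k_U-1)$ in the interior -- exactly the quadrilateral N\'ed\'elec count with ``tangential'' replaced by ``normal''. On the other side, $k_U-1$ uniform refinements split $K$ into a $k_U\times k_U$ array of subcells; the lowest-order RT element carries one \ac{dof} per sub-edge, and a direct count of the $2k_U(k_U+1)$ sub-edges of the array gives $4k_U$ sub-edges on $\partial K$ ($k_U$ on each macro-edge) and $2k_U(k_U-1)$ interior sub-edges, matching per entity.

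For hexahedra ($d=3$), I would use that $\mathbf{RT}_{k_U}(K)$ has $3k_U^{2}(k_U+1)$ \acp{dof} in total: $k_U^{2}$ normal-moment \acp{dof} on each of the six faces (hence $6k_U^{2}$ face \acp{dof}), none on the edges, and $3k_U^{2}(k_U-1)$ interior \acp{dof}. For the linearised space, the $k_U-1$ uniform refinements split $K$ into a $k_U\times k_U\times k_U$ array of sub-hexahedra; the lowest-order RT element carries one \ac{dof} per sub-face and none on sub-edges, and counting the $3k_U^{2}(k_U+1)$ sub-faces of the array yields $6k_U^{2}$ on $\partial K$ ($k_U^{2}$ per macro-face) and $3k_U^{2}(k_U-1)$ interior ones, with no \acp{dof} on (sub-)edges. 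The per-entity counts coincide, so $\dim U_h^2=\dim V_h^2$.

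The computations are elementary; the only point requiring care -- and the place where a careless argument could fail -- is the combinatorial bookkeeping of the sub-entities of the refined macro-cell: one must correctly separate the sub-faces (resp.\ sub-edges in 2D) lying on $\partial K$ from those in its interior, and verify that the lowest-order RT \acp{dof} are attached \emph{only} to codimension-one entities (faces in 3D, edges in 2D), so that the match is genuinely per geometrical entity and not merely an accident of the global totals. As in Proposition~\ref{prop:nedelec-dim}, the same argument extends to any space dimension and, with the corresponding dimension formulae, to other families of compatible elements.
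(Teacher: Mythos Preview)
Your proposal is correct and follows essentially the same counting argument as the paper. The only cosmetic difference is that, for the quadrilateral case, the paper invokes the $90^\circ$ rotation isomorphism between 2D \ac{rt} and N{\'e}d{\'e}lec elements to reduce directly to Proposition~\ref{prop:nedelec-dim}, whereas you spell out the edge/interior count explicitly (while noting the same connection); the hexahedral count is identical to the paper's.
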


\begin{proof}
  Since quadrilateral N{\'e}d{\'e}lec elements can be built by rotating \ac{rt} elements~\cite{Ern2021}, Prop.~\ref{prop:nedelec-dim} applies to quadrilateral \ac{rt} elements as well. 
  Thus,  the \acp{dof} per geometrical entity and the dimensions of ${U_h^2}$ and ${V_h^2}$ are equal for these 2D elements.

  Each hexahedral \ac{rt} element of order $k_U$ has a total of $3 k_U^2 (k_U + 1)$ \acp{dof}: $6 k_U^2$ on the faces (with $k_U^2$ \acp{dof} per face) and $3 k_U^2 (k_U - 1)$ in the interior. 
  After applying $k_U - 1$ uniform refinements, the refined element has $k_U^2$ subfaces on each of the original faces and $3 k_U^2 (k_U - 1)$ interior subfaces. 
  Since the lowest order \ac{rt} element has one \ac{dof} per face, the linearised \ac{rt} element also has $6 k_U^2$ face \acp{dof} and $3 k_U^2 (k_U - 1)$ interior \acp{dof}. 
  Thus,  the \acp{dof} per geometrical entity and the dimensions of ${U_h^2}$ and ${V_h^2}$ are also equal for hexahedral \ac{rt} elements.
\end{proof}

\subsubsection{Maxwell problem finite element discretisation} \label{sec:method:compfe}

Let us consider an offset $\bar{\mathbf{u}}_h \in U_h^1$ such that $\bar{\mathbf{u}}_h = \pi_h^1(\mathbf{g})$ on $\Gamma_D$. A common choice is to extend $\bar{\mathbf{u}}_h$ by zero on the interior.
We use the subscript $0$ to denote the continuous and discrete spaces with zero trace on $\partial \Omega$ and $\pi_{h,0}^i$ to represent the interpolation onto these subspaces. For example, $U_{h,0}^1$ is the discrete subspace of $U_h^1$ with zero tangential trace on $\partial \Omega$ and $\pi_{h,0}^1 : U_0^1 \rightarrow U_{h,0}^1$ 
With these ingredients, we can formulate the \ac{fe} problem for the Maxwell's problem as: find $\mathbf{u}_h \in U_{h,0}^1$ such that
\begin{equation} \label{eq:maxwell_fe}
    a(\mathbf{u}_h, \mathbf{v}_h) = \ell(\mathbf{v}_h) - a(\bar{\mathbf{u}}_h, \mathbf{v}_h), \quad \forall \mathbf{v}_h \in {V_{h,0}^1}.
\end{equation}
The \ac{fe} residual for this problem is defined as
\begin{equation} \label{eq:maxwell_fe_res}
    \mathcal{R}_h(\mathbf{u}_h) = a(\mathbf{u}_h + \bar{\mathbf{u}}_h, \cdot) - \ell(\cdot) \in {V_{h,0}^1}'.
\end{equation}
The well-posedness of the discrete problem depends on a inf-sup compatibility condition between the discrete spaces $U_{h,0}^1$ and ${V_{h,0}^1}$.

\subsubsection{Surface Darcy problem finite element discretisation} \label{sec:method:tracefe}

In this section, we consider the \ac{fe} discretisation of the Darcy problem on a closed surface; the restriction to a flat space is straightforward. It relies on a discrete approximation of the manifold $S$, denoted by $S_h$. 
For the particular case $S$ is a sphere, a common choice for the discrete manifold $S_h$, of particular relevance for computational 
atmospheric flow dynamics, is the so-called cubed-sphere mesh~\cite{Ronchi1996}. 
In this mesh, $S_h$ is made of quadrilateral elements $K\subset \mathbb{R}^3$ (thus living in 3D ambient space) that are the image under a diffeomorphism $\Phi_K$ of a reference square $\hat{K} \subset \mathbb{R}^2$. That is, we have that $K=\Phi_K(\hat{K})$. We denote by $J_K$ the Jacobian of $\Phi_K$, and stress that $J_K$ has three rows and two columns (as $\Phi_K$ has three components, and depends on two parameters). We use $G_K=J_K^T J_K$ to denote the corresponding first fundamental form.

We pursue the approach in~\cite{Rognes2013} to build the \ac{fe} space ${U_h^2}$ on $S_h$.
This space is made of piece-wise polynomial vector-valued fields tangent to $S_h$ with normal component continuous across element interfaces. To this end, this approach relies on a $\mathbf{RT}_k(\hat{K})$ element defined on parametric space $\hat{K}$. 
Let $\mathbf{u}_h$  be an arbitrary element of ${U_h^2}$, and $\mathbf{u}_h|_K$ its restriction to $K$. Then the surface contravariant Piola mapping is applied to build $\mathbf{u}_h|_K$ out of its pullback vector-valued function $\hat{\mathbf{u}} \in \mathbf{RT}_k(\hat{K})$ in parametric space. 
Namely, we have that $\mathbf{u}_h|_K\circ \Phi_K(\hat{\mathbf{x}})=\frac{1}{\sqrt{|G_K|}}J_K\hat{\mathbf{u}}(\hat{\mathbf{x}})$.
We stress that $\hat{\mathbf{u}} \in \mathbb{R}^2$ while $\mathbf{u}_h|_K \in \mathbb{R}^3$.
A similar (but different) construction is used to compute the discrete surface divergence operator $\nabla_{S_h} \cdot \mathbf{u}_h$ on each cell. We refer to~\cite{Rognes2013} for further details.

As for the curl-conforming problem, we can also consider a Petrov-Galerkin discretisation of this problem using the linearised space $V_h^i$ for $i\in \{2,3\}$  discussed above. With these ingredients, the Petrov-Galerkin discretisation of~\eqref{eq:darcy_strong} reads as: 
find $(\mathbf{u}_h, p_h) \in {U_h^2} \times {\tilde{U}_h^3}$ such that 
\begin{align} \label{eq:darcy_fe}
    \mathcal{A}_h((\mathbf{u}_h, \mathbf{v}_h), (p_h, q_h)) = \mathcal{L}_h((\mathbf{v}_h, q_h)) \quad \forall (\mathbf{v}_h, q_h) \in {V_h^2} \times {V_h^3},
\end{align}
where $\mathcal{A}_h$ and $\mathcal{L}_h$ are the discrete mixed forms corresponding to $\mathcal{A}$ and $\mathcal{L}$, respectively, and $S$ is replaced by $S_h$, and $\pmb{\nabla}_S \cdot$ is replaced by $\pmb{\nabla}_{S_h} \cdot$. We note that this problem, as-is, does not have a unique solution. 
Another solution can be obtained by adding an arbitrary constant to the discrete pressure field $p_h$. To fix the constant, we further impose the constraint $\int_{S_h} p_h = 0$ using a Lagrange multiplier. This detail is omitted from the presentation for brevity. 
Besides, the \ac{fe} residual for this problem is defined as
\begin{equation} \label{eq:darcy_fe_res}
    \mathcal{R}_h(\mathbf{u}_h, p_h) = \mathcal{A}((\mathbf{u}_h, p_h), \cdot) - \mathcal{L}(\cdot) \in {V_h^2}' \times {V_h^3}'. 
\end{equation} 
The well-posedness of this saddle-point problem relies on the Babuska-Brezzi theory (see~\cite{Ern2021b}).

\subsection{Neural networks} \label{sec:method:nn}
We utilise fully-connected, feed-forward \acp{nn} composed of affine maps and nonlinear activation functions. We use a tuple $(n_0, \ldots n_L)\in \mathbb{N}^{(L+1)}$ to represent the network architecture, where $L$ is the number of layers, and $n_k$ (for $1 \leq k \leq L$) is the number of neurons at each layer. Clearly, $n_0 = d$; for scalar-valued \acp{nn}, $n_L = 1$; for vector-valued \acp{nn}, $n_L = d$. Following~\cite{Badia2024,Badia2024adaptive}, we adopt $n_1 = n_2 = ... = n_{L-1} = n$, meaning all the hidden layers have an equal number of neurons $n$.

At each layer $k$ (for $1 \leq k \leq L$), we denote the affine map as $\pmb{\Theta}_k: \mathbb{R}^{n_{k-1}} \to \mathbb{R}^{n_k}$, defined by $\pmb{\Theta}_k \pmb{x} = \pmb{W}_k \pmb{x} + \pmb{b}_k$, where $\pmb{W}_k \in \mathbb{R}^{n_k \times n_{k-1}}$ is the weight matrix and $\pmb{b}_k \in \mathbb{R}^{n_k}$ is the bias vector.
After each affine map except the final one, we apply an activation function $\rho: \mathbb{R} \to \mathbb{R}$ element-wise. We apply the same fixed activation function across all layers, although each layer theoretically could have a distinct (trainable) activation function.
With these definitions, the \ac{nn} is a parametrisable function $\mathcal{N}(\pmb{\theta}): \mathbb{R}^d \to \mathbb{R}^{n_L}$ defined as 
\begin{equation} \label{eq:nn_structure}
  \mathcal{N}(\pmb{\theta}) = \pmb{\Theta}_L \circ \rho \circ \pmb{\Theta}_{L-1} \circ \ldots \circ \rho \circ \pmb{\Theta}_1,
\end{equation}
where $\pmb{\theta}$ represents all the trainable network parameters $\pmb{W}_k$ and $\pmb{b}_k$. We denote the network nonlinear manifold space with $\mathcal{N}$ and a realisation of it with $\mathcal{N}(\pmb{\theta})$.

\subsection{Compatible finite element interpolated neural networks} \label{sec:method:comfeinn}
For simplicity, we focus on the Maxwell's problem~\eqref{eq:maxwell_strong} in this section. We shall discuss the Darcy problem in \sect{sec:method:tracefeinn}. Our approach to the neural discretisation relies on finding a \ac{nn} such that its interpolation into $U_{h,0}^1$ minimises a discrete dual norm of the residual $\mathcal{R}_h$ over the test space ${V_{h,0}^1}$. 

The compatible \ac{feinn} method combines the compatible \ac{fe} discretisation~\eqref{eq:maxwell_fe} and the \ac{nn}~\eqref{eq:nn_structure} by integrating a \ac{nn} $\mathbf{u}_\mathcal{N}$ into the \ac{fe} residual~\eqref{eq:maxwell_fe_res}. This leads to the following non-convex optimisation problem:
\begin{equation} \label{eq:maxwell_optim}
    \mathbf{u}_\mathcal{N} \in \argmin{\mathbf{w}_\mathcal{N} \in \mathcal{N}} \mathscr{L}(\mathbf{w}_\mathcal{N}), \qquad \mathscr{L}(\mathbf{w}_\mathcal{N}) \doteq \norm{\mathcal{R}_h(\pi^1_{h,0}(\mathbf{w}_\mathcal{N}))}_{{V^1_0}'}.
\end{equation}
Other choices of the residual norm are discussed in \sect{sec:method:loss}. {We note that some \ac{nn} architectures are not compact and the minimum cannot be attained but approximated up to any $\epsilon > 0$ in a meaningful norm. We refer to~\cite{Badia2024adaptive} for more details.} 

Compatible \acp{feinn} make use of the appropriate interpolators for the functional space in which the problem is posed, determined by the discrete de Rham complex. This approach maintains all the advantages of the \acp{feinn} in~\cite{Badia2024,Badia2024adaptive} for grad-conforming approximations. 
We stress the fact that the boundary conditions are incorporated in the method by the interpolation operator $\pi_{h,0}^1$ on the \ac{fe} space with zero trace and 
the \ac{fe} offset $\bar{\mathbf{u}}_h$. Unlike \acp{pinn}~\cite{Raissi2019,Kharazmi2021}, there is no need to incorporate these conditions as penalties in the loss, nor to impose them at the \ac{nn} level using distance functions as in~\cite{Sukumar2022, Berrone2022}.
Furthermore, since compatible \ac{fe} bases are polynomials, the integrals in the \ac{fe} residual can be computed exactly using Gaussian quadrature rules, avoiding issues with Monte Carlo integration~\cite{Rivera2022} and the need for adaptive quadratures~\cite{Magueresse2024}. Besides, relying on a discrete \ac{fe} test space, one can use the dual norm of the residual in the loss function, which is essential in the numerical analysis of the method~\cite{Badia2024adaptive} and has been experimentally observed beneficial in the training process.

\subsection{Trace finite element interpolated neural networks} \label{sec:method:tracefeinn}
Based on the approach to \ac{fe} discretisation discussed in \sect{sec:method:tracefe}, we introduce the trace \ac{feinn} method for approximating~\eqref{eq:darcy_strong}. The optimisation problem involves two \acp{nn} $\mathbf{u}_\mathcal{N}$ and $p_\mathcal{N}$, and reads: 
\begin{equation} \label{eq:surface_darcy_optim}
    \mathbf{u}_\mathcal{N}, p_\mathcal{N} \in \argmin{\mathbf{w}_\mathcal{N}, q_\mathcal{N} \in \mathcal{N}_\mathbf{u} \times \mathcal{N}_p} \mathscr{L}(\mathbf{w}_\mathcal{N}, q_\mathcal{N}), \quad \mathscr{L}(\mathbf{w}_\mathcal{N}, q_\mathcal{N}) \doteq  \norm{\mathcal{R}_h(\pi_h^2(\mathbf{w}_\mathcal{N}), \pi_h^3(q_\mathcal{N}))}_{{V^2}' \times {V^3}'}.
\end{equation}
Note that both \acp{nn} are defined in $\mathbb{R}^3$. Besides, $\mathbf{u}_\mathcal{N}$ is not necessarily tangent to $S$. This method is coined with the term trace \acp{feinn} due to its resemblance to TraceFEM~\cite{Olshanskii2017} (as it seeks an approximation of the surface \ac{pde} on a finite-dimensional manifold living in a higher dimensional space). 
However, we stress that, as opposed to TraceFEM, it does not need stabilisation terms in the loss function to restrict the problem and unknowns to the tangent space, as it relies on the interpolation onto surface \ac{fe} spaces defined on $S_h$.
Additionally, although it would be possible to use a single \ac{nn} with $d+1$ output neurons for both the flux and pressure fields, here we use separate \acp{nn} for each field to allow more flexibility in their design, as each solution field may have distinct features. 

Since we are considering a closed surface, we do not need to impose boundary conditions on the \ac{nn} output. For non-empty boundaries, the boundary conditions can be incorporated in the loss function using the interpolant $\pi_{h,0}^2$. 

\subsection{Loss functions} \label{sec:method:loss} 
The \ac{fe} residual $\mathcal{R}_h$ (e.g., the one in~\eqref{eq:maxwell_fe_res}) is isomorphic to the vector $[\mathbf{r}_h(\mathbf{w}_h)]_i = \left< \mathcal{R}_h(\mathbf{w}_h), \varphi^i \right> \doteq \mathcal{R}_h(\mathbf{w}_h)(\varphi^i)$, where $\{\varphi^i\}_{i=1}^N$ are the bases that span ${V_h^i}$, for $i=0,\ldots,3$. Thus, we can use the following loss function:
\begin{equation} \label{eq:maxwell_vec_loss}
    \mathscr{L}(\mathbf{u}_\mathcal{N}) = \norm{\mathbf{r}_{h}(\pi^i_{h,0}(\mathbf{u}_\mathcal{N}))}_\chi,
\end{equation} 
where $\norm{\cdot}_\chi$ is an algebraic norm. The $\ell^2$ norm is often used in the \acp{pinn} literature~\cite{Raissi2019,Kharazmi2021,Berrone2022} and has proven effective in many \ac{feinn} experiments~\cite{Badia2024,Badia2024adaptive}. However, the Euclidean norm of $\mathbf{r}_h$ is a variational crime and this choice is ill-posed in the limit $h \rightarrow 0$;  at the continuous level, the $L^2$-norm of $\mathcal{R}(\mathbf{u})$ is not defined in general.

As proposed in~\cite{Badia2024,Badia2024adaptive}, we can instead use a {\em discrete dual norm} in the loss~\eqref{eq:maxwell_optim} by introducing a discrete Riesz projector $\mathcal{B}_h^{-1}: {V_{h,0}^i}' \to {V_{h,0}^i}$ such that 
\begin{equation*}
    \mathcal{B}_h^{-1}\mathcal{R}_h(\mathbf{w}_h) \in {V_{h,0}^i} \ : \left( \mathcal{B}_h^{-1}\mathcal{R}_h(\mathbf{w}_h), \mathbf{v}_h \right)_{U^i}  = \mathcal{R}_h(\mathbf{w}_h)(\mathbf{v}_h), \quad \forall \mathbf{v}_h \in {V_{h,0}^i},
\end{equation*}
where $\mathcal{B}_h$ is (approximately) the corresponding Gram matrix in ${V_{h,0}^i}$ (the one corresponding to the $H(\bcurl)$ or $H(\bdiv)$ inner product in this work). The $\mathcal{B}_h$-\emph{preconditioned} loss function can be rewritten as
\begin{equation} \label{eq:maxwell_precond_loss}
  \mathscr{L}(\mathbf{u}_\mathcal{N}) = \norm{\mathcal{B}_h^{-1}\mathcal{R}_h (\pi^i_{h,0}(\mathbf{u}_\mathcal{N}))}_\Upsilon,
\end{equation}  
The dual norm of the residual in ${V_{h,0}^i}'$ is attained by using $\Upsilon = U^i$ (i.e., or $H(\bcurl; \Omega)$ or $H(\bdiv; \Omega)$ norm). This is the case that is needed for obtaining optimal error estimates in~\cite{Badia2024adaptive}. However, other choices, e.g., $L^2(\Omega)^d$ are possible.  
In practice, one can consider any spectrally equivalent approximation of $\mathcal{B}_h$, a preconditioner, to reduce computational costs; e.g. the \ac{gmg} preconditioner is effective for the Poisson equation, as shown in~\cite{Badia2024}. Effective multigrid solvers for $H(\bcurl)$ and $H(\bdiv)$ have been designed (see, e.g.,~\cite{Hiptmair2007-lb}), and can be used to reduced the computational cost of the proposed approach.   

After interpolation, $\pi^1_{h,0}(\mathbf{u}_\mathcal{N}) \in U^1_{h,0}$ vanishes on $\Gamma_D$, ensuring that the \ac{feinn} solution $\pi^1_{h,0}(\mathbf{u}_\mathcal{N}) + \bar{\mathbf{u}}_h$ strongly satisfies the Dirichlet boundary conditions. In \ac{fem}, $\bar{\mathbf{u}}_h$ is a standard component and can be computed efficiently. Our method provides a simple and effective way to strongly enforce these conditions, avoiding the need for penalty terms in the loss function~\cite{Raissi2019}, or the use of distance functions~\cite{Sukumar2022, Berrone2022}.
Additionally, Gaussian quadrature rules allow efficient and exact computation of the integrals in~\eqref{eq:maxwell_precond_loss}. Spatial derivatives are now calculated using the polynomial \ac{fe} basis functions, which are more efficient than hybrid automatic differentiation (mixed derivatives with respect to parameters and inputs) widely used in standard \ac{pinn} methods.
Furthermore, the use of a residual dual function (i.e., the inclusion of preconditioners) further ensures well-posedness of the problem and accelerates training convergence.

The formulation can readily be extended to multifield problems. For the Darcy problem above, the loss defined by the residual vector is
\begin{equation} \label{eq:darcy_vec_loss}
    \mathscr{L}(\mathbf{u}_\mathcal{N}, p_\mathcal{N}) = \norm{\mathbf{r}_h(\pi_h^2(\mathbf{u}_\mathcal{N}), \pi_h^3(p_\mathcal{N}))}_\chi.
\end{equation}
This simple yet effective loss function is adopted in the numerical experiments discussed below.
We can also consider the multifield loss function   
\begin{equation} \label{eq:darcy_precond_loss}
    \mathscr{L}(\mathbf{u}_\mathcal{N}, p_\mathcal{N}) = \norm{\mathcal{B}_h^{-1} \mathcal{R}_h (\pi_h^2(\mathbf{u}_\mathcal{N}), \pi_h^3(p_\mathcal{N}))}_{U^2\times U^3},
\end{equation}
where $\mathcal{B}_h^{-1}$ is the Gram matrix in ${V_h^2} \times {V_h^3}$. 

\subsection{Extension to inverse problems} \label{sec:method:inverse}
Given, e.g., partial or noisy observations of the states, inverse problem solvers aim to estimate unknown model parameters and complete the state fields. In~\cite{Badia2024}, the use of \acp{feinn} for solving inverse problems has been extensively discussed. We extend the idea to compatible \acp{feinn} in this section.
We denote the unknown model parameters as $\pmb{\Lambda}$, which may include physical coefficients, forcing terms, etc. We approximate $\pmb{\Lambda}$ with one or several \acp{nn} denoted by $\pmb{\Lambda}_\mathcal{N}$. For exact integration, we introduce the interpolants $\pmb{\pi}_h$ to interpolate $\pmb{\Lambda}_\mathcal{N}$ onto \ac{fe} spaces. 

Let us consider a discrete measurement operator $\mathcal{D}_h: U_h^i \rightarrow \mathbb{R}^{M\times d}$ and the observations $\mathbf{d} \in \mathbb{R}^{M\times d}$, where $M \in \mathbb{N}$ is the number of measurements. 
The loss function for the inverse problem reads:
\begin{equation} \label{eq:inverse_loss}
  \mathscr{L}(\pmb{\Lambda}_\mathcal{N}, \mathbf{u}_\mathcal{N}) \doteq \norm{\mathbf{d} - \mathcal{D}_h (\pi_{h,0}^i(\mathbf{u}_\mathcal{N}) + \bar{\mathbf{u}}_h)}_{\ell^2} + \alpha \norm{\mathcal{R}_h(\pmb{\pi}_h(\pmb{\Lambda}_\mathcal{N}), \pi^i_{h,0}(\mathbf{u}_\mathcal{N}))}_\Upsilon, 
\end{equation}
where $\alpha \in \mathbb{R}^+$ is a penalty coefficient for the \ac{pde} constraint.

\subsection{Numerical analysis} \label{sec:analysis}
The analysis of \acp{feinn} has been carried out in~\cite{Badia2024adaptive} in a general setting. The analysis of compatible \acp{feinn} fits the same framework. We summarise below the main results of this analysis. The well-posedness of the continuous problem relies on the inf-sup condition: there exists a constant $\beta_0 > 0$ such that
\begin{equation}\label{eq:continuous-infsup}
  \inf_{\mathbf{w} \in U} \sup_{\mathbf{v} \in U} \frac{a(\mathbf{w},\mathbf{v})}{\norm{\mathbf{w}}_U \norm{\mathbf{v}}_U} \geq \beta_0,
\end{equation} 
where $U$ is the functional space where the solution of the \ac{pde} is sought (we omit the $i$ superscript in this section). Besides, the bilinear form must be continuous, i.e., there exists a constant $\gamma > 0$ such that
\begin{equation}\label{eq:continuity}
a(\mathbf{u} , \mathbf{v}) \leq \gamma \norm{\mathbf{u}}_U \norm{\mathbf{v}}_U, \quad \forall \mathbf{u},\mathbf{v} \in U.
\end{equation}
Using a Petrov-Galerkin discretisation, the discrete problem is well-posed if a discrete inf-sup condition is satisfied: there exists a constant $\beta > 0$ independent of the mesh size $h$ such that  
\begin{equation}\label{eq:discrete-infsup}
  \inf_{\mathbf{w}_h \in U_h} \sup_{\mathbf{v}_h \in {V_h}} \frac{a(\mathbf{w}_h,\mathbf{v}_h)}{\norm{\mathbf{w}_h}_U \norm{\mathbf{v}_h}_U} \geq \beta, \qquad \beta > 0,
\end{equation} 
where $U_h \subset U$ and ${V_h} \subset U$ are the discrete trial and test spaces, respectively. The discrete inf-sup condition, using $k_U = k_V$, is satisfied for the compatible \ac{fe} spaces being used in this work, which is a direct consequence of the cochain projection between the continuous and discrete de Rham complexes. 
Using other \ac{fe} spaces, such as Lagrange elements, fails the discrete inf-sup condition, making the discrete problem ill-posed. In such cases, regularisation is required to ensure a well-posed \ac{fe} problem. See~\cite{Costabel2002,Badia2012-xf,Badia2009-yj} for more details.
For the Petrov-Galerkin discretisation with linearised test spaces, i.e., $k_U > k_V = 1$, we are not aware of any results in the literature. However, the experimental evaluation of condition numbers for the spaces being considered in this paper indicate that this inf-sup condition is satisfied for all the orders being considered in this work. Under these conditions, the following \emph{nonlinear Cea's lemma} holds, where we assume that the \ac{nn} can emulate the \ac{fe} space for simplicity (see~\cite{Badia2024adaptive} for a more general statement). 
\begin{theorem}\label{eq:feinn-error}
  For any $\epsilon > 0$, the minimum $\mathbf{u}_{\mathcal{N}}$ of~\eqref{eq:maxwell_optim} holds the following a priori error estimate:
  \begin{equation}
\norm{ \mathbf{u} - \pi_h(\mathbf{u}_{\mathcal{N}}) }_U 
 \leq \rho \inf_{w_h \in \mathcal{N}_h} \norm{ \mathbf{u} - \mathbf{w}_h }_U.
  \end{equation}
\end{theorem}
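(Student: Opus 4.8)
The plan is to combine three ingredients: the discrete inf-sup stability \eqref{eq:discrete-infsup}, the minimality of $\mathbf{u}_{\mathcal{N}}$ in \eqref{eq:maxwell_optim}, and classical Babu\v{s}ka quasi-optimality for the Petrov-Galerkin \ac{fe} problem \eqref{eq:maxwell_fe}. Throughout, let $\mathbf{u} \in U^1_0$ denote the homogeneous component of the exact solution as in \eqref{eq:maxwell_weak}, let $\mathbf{u}_h \in U^1_{h,0}$ be the unique solution of \eqref{eq:maxwell_fe}, and write $\pi_h$ for $\pi^1_{h,0}$; for simplicity I assume the lifting is reproduced exactly, $\bar{\mathbf{u}}_h = \pi_h^1(\mathbf{g})$ (otherwise a boundary-data interpolation term must be added to the best-approximation bound, which is standard). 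By \eqref{eq:maxwell_fe_res} one has $\mathcal{R}_h(\mathbf{u}_h) = 0$ in ${V^1_{h,0}}'$, and since the offset cancels in differences, $\mathcal{R}_h(\mathbf{w}_h)(\mathbf{v}_h) = a(\mathbf{w}_h - \mathbf{u}_h, \mathbf{v}_h)$ for all $\mathbf{w}_h \in U^1_{h,0}$ and $\mathbf{v}_h \in V^1_{h,0}$.

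First I would establish the equivalence, at the discrete level, between the dual norm of the residual and the error to the \ac{fe} solution. For any $\mathbf{w}_h \in U^1_{h,0}$, applying \eqref{eq:discrete-infsup} to $\mathbf{w}_h - \mathbf{u}_h$ gives
\begin{equation*}
  \beta\,\norm{\mathbf{w}_h - \mathbf{u}_h}_U \le \sup_{\mathbf{v}_h \in V^1_{h,0}} \frac{a(\mathbf{w}_h - \mathbf{u}_h, \mathbf{v}_h)}{\norm{\mathbf{v}_h}_U} = \norm{\mathcal{R}_h(\mathbf{w}_h)}_{{V^1_{h,0}}'},
\end{equation*}
while continuity \eqref{eq:continuity} gives the reverse bound $\norm{\mathcal{R}_h(\mathbf{w}_h)}_{{V^1_{h,0}}'} \le \gamma\,\norm{\mathbf{w}_h - \mathbf{u}_h}_U$. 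Here I would also note that, for a functional in ${V^1_{h,0}}'$, the quantity appearing in the loss \eqref{eq:maxwell_optim} is precisely this discrete dual norm, consistently with \eqref{eq:maxwell_precond_loss}. Taking $\mathbf{w}_h = \pi_h(\mathbf{u}_{\mathcal{N}})$ in the first inequality yields $\norm{\pi_h(\mathbf{u}_{\mathcal{N}}) - \mathbf{u}_h}_U \le \beta^{-1}\mathscr{L}(\mathbf{u}_{\mathcal{N}})$.

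Next I would use minimality. By the emulation hypothesis, $\mathcal{N}_h \doteq \pi_h(\mathcal{N})$ equals $U^1_{h,0}$, so there is a realisation $\mathbf{w}_{\mathcal{N}} \in \mathcal{N}$ with $\pi_h(\mathbf{w}_{\mathcal{N}}) = \mathbf{u}_h$, whence $\mathscr{L}(\mathbf{w}_{\mathcal{N}}) = \norm{\mathcal{R}_h(\mathbf{u}_h)}_{{V^1_{h,0}}'} = 0$. Since $\mathbf{u}_{\mathcal{N}}$ realises the infimum in \eqref{eq:maxwell_optim} up to the prescribed $\epsilon$ (recall the minimum need not be attained for non-compact architectures), $\mathscr{L}(\mathbf{u}_{\mathcal{N}}) \le \epsilon$, and therefore $\norm{\pi_h(\mathbf{u}_{\mathcal{N}}) - \mathbf{u}_h}_U \le \epsilon/\beta$. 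Finally, the triangle inequality together with Babu\v{s}ka quasi-optimality for \eqref{eq:maxwell_fe} --- a standard consequence of \eqref{eq:continuity}--\eqref{eq:discrete-infsup} (see, e.g., \cite{Ern2021b}), with a constant $\rho$ depending only on $\gamma$ and $\beta$ --- gives
\begin{equation*}
  \norm{\mathbf{u} - \pi_h(\mathbf{u}_{\mathcal{N}})}_U \le \norm{\mathbf{u} - \mathbf{u}_h}_U + \frac{\epsilon}{\beta} \le \rho \inf_{\mathbf{w}_h \in U^1_{h,0}} \norm{\mathbf{u} - \mathbf{w}_h}_U + \frac{\epsilon}{\beta} = \rho \inf_{\mathbf{w}_h \in \mathcal{N}_h} \norm{\mathbf{u} - \mathbf{w}_h}_U + \frac{\epsilon}{\beta},
\end{equation*}
and since $\epsilon > 0$ is arbitrary the claimed estimate follows, the slack $\epsilon/\beta$ being absorbed into the $\epsilon$ of the statement.

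I expect the delicate points to be bookkeeping rather than substance: the treatment of the lifting $\bar{\mathbf{u}}_h$, which in the non-simplified version contributes a boundary-data interpolation term (hence the estimate is phrased for the homogeneous component $\mathbf{u}$); and the identification of the loss norm with the discrete dual norm on ${V^1_{h,0}}'$, needed so that \eqref{eq:discrete-infsup} is exactly the stability estimate controlling it. The genuinely substantive generalisation --- dropping the emulation hypothesis so that $\mathcal{N}_h \subsetneq U^1_{h,0}$, in which case $\inf_{\mathbf{w}_h \in \mathcal{N}_h}\norm{\mathbf{u}-\mathbf{w}_h}_U$ must be replaced by a genuine nonlinear best-approximation quantity over the discrete manifold --- is the point handled by the general statement in \cite{Badia2024adaptive}, and would be the main work if one wanted the sharp version.
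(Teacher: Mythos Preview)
The paper does not actually include a proof of this theorem: it states the result and defers the analysis to \cite{Badia2024adaptive}, merely summarising the ingredients (continuity \eqref{eq:continuity}, the discrete inf-sup \eqref{eq:discrete-infsup}, and the emulation hypothesis). Your proposal is correct and is precisely the argument one expects from that framework --- discrete inf-sup to bound $\norm{\pi_h(\mathbf{u}_{\mathcal{N}})-\mathbf{u}_h}_U$ by the loss, minimality and emulation to make the loss $\le\epsilon$, then the classical Babu\v{s}ka quasi-optimality of $\mathbf{u}_h$ --- so there is nothing to compare against beyond confirming that your route matches the paper's declared strategy.
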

We note that this result is strong because it shows that the minimum of the loss function of the \ac{fe} residual is quasi-optimal on the \ac{nn} manifold, thus exploiting the nonlinear power of this approximation. However, the proof only bounds the error of the \ac{fe} interpolation of the \ac{nn} solution, not the error of the \ac{nn} itself, which is experimentally observed to be much lower in most cases (see~\cite{Badia2024adaptive} for a discussion on this topic).

\section{Numerical experiments} \label{sec:experiments}

In this section, we conduct two comprehensive sets of numerical experiments for forward and inverse problems involving Maxwell's and Darcy equations, the latter being posed on the sphere. In the forward experiments, we compare the performance of compatible \acp{feinn} against \ac{fem}. Our results show that compatible \acp{feinn} achieve higher accuracy than \ac{fem} when the solution is smooth. The use of weak residual discrete dual norms (built out of the inverse of a Gram matrix, or an approximation to it, i.e., a preconditioner) further improves the stability and performance of compatible \acp{feinn}. 
However, the accuracy improvements can hardly overcome the extra cost of the training process compared to \ac{fem}. The use of a fixed mesh for the \ac{nn} interpolation prevents the effective exploitation of \ac{nn} nonlinear approximation properties. For this reason, we consider in a second stage the use of adaptive \acp{feinn} and inverse problems. 

First, we use $h$-adaptive \acp{feinn} to attack problems with localised features, demonstrating that the trained \acp{nn} outperform \ac{fem} solutions for sharp-feature analytic solutions and match \ac{fem} accuracy for singular solutions, all without nested loops.
In the inverse problem experiments, we focus on recovering the unknown physical parameters of Maxwell's equations from partial or noisy observations, using the training strategy proposed in~\cite{Badia2024}. 
We compare the performance of compatible \acp{feinn} with the adjoint \ac{nn} method. With partial observations, the addition of an extra \ac{nn} in the compatible \ac{feinn} method improves the accuracy of both the predicted parameters and recovered states compared to the adjoint \ac{nn} method. When the observations are noisy, compatible \acp{feinn} achieve comparable performance to the adjoint \ac{nn} method {\em without any regularisation terms in the loss function}.

We omit the comparison of the \ac{feinn} method with other \ac{nn}-based approaches, such as \acp{pinn} and \acp{vpinn}. This comparison is already studied in detail in~\cite{Berrone2022,Badia2024}. These studies find that the interpolated \ac{nn} approach performs similar to or better than \acp{pinn} or \acp{vpinn} for a given number of \ac{nn} evaluations. Additionally, the computational cost of interpolated \acp{nn} is lower than that of standard \ac{pinn} methods, as spatial derivatives of \acp{nn} are handled using polynomial interpolation. Furthermore, as demonstrated in~\cite{Badia2024,Badia2024adaptive}, \ac{feinn} convergence can be significantly improved by using preconditioned losses.

\subsection{Implementation} \label{sec:implementation}
The implementation details of \acp{feinn} for forward and inverse problems with weak solutions in $H^1$ are provided in~\cite{Badia2024}, and those for $h$-adaptive \acp{feinn} are included in~\cite{Badia2024adaptive}. In this section, we briefly discuss the additional features required for compatible and trace \acp{feinn}, specifically the interpolation of \acp{nn} onto N{\'e}d{\'e}lec or \ac{rt} \ac{fe} spaces.

We implement the compatible \acp{feinn} method using the Julia programming language. The compatible \ac{fem} part is handled by the \texttt{Gridap.jl}~\cite{Badia2020, Verdugo2022} package, and the \ac{nn} part is implemented using the \texttt{Flux.jl}~\cite{Innes2018} package. Loss function gradient propagation is managed through user-defined rules with the \texttt{ChainRules.jl}~\cite{ChainRules2024} package. For adaptive \acp{feinn}, we rely on the \texttt{GridapP4est.jl}~\cite{GridapP4estGithub} package to handle forest-of-octrees meshes.

The implementation of \ac{nn} interpolations onto N{\'e}d{\'e}lec or \ac{rt} \ac{fe} spaces is as follows. For each moment-based \ac{dof} (e.g., the integral of the normal component of the network over a $S_h$ cell edge for lowest-order \ac{rt} \acp{fe} on the discrete manifold), we evaluate the \ac{nn} at the quadrature points and multiply the results by the corresponding weights to compute the \ac{dof} value. Since, for a fixed mesh, the quadrature points and weights are fixed for each \ac{dof}, they can be extracted and stored once, and reused thereafter at each training iteration. For non-conforming \ac{fe} spaces, we only need to interpolate \acp{nn} onto the master \acp{dof} and use multi-point constraints to compute the slave \acp{dof} values~\cite{Olm2019,Badia2020b}.

\subsection{Forward problems} \label{subsec:forward_exp}
To evaluate the accuracy of the identified solutions $p^{id}$ and $\mathbf{u}^{id}$ for forward problems, we compute their $L^2$ and $H(\bcurl)$ or $H(\bdiv)$ error norms:
\begin{eqnarray*}
    e_{L^2(\Omega)}(p^{id}) = \ltwonorm{p - p^{id}}, \quad
    e_{L^2(\Omega)^d}(\mathbf{u}^{id}) = \ltwonormd{\mathbf{u} - \mathbf{u}^{id}}, \\
    e_{H(\bcurl; \Omega)}(\mathbf{u}^{id}) = \hcurlnorm{\mathbf{u} - \mathbf{u}^{id}}, \quad
    e_{H(\bdiv; \Omega)}(\mathbf{u}^{id}) = \hdivnorm{\mathbf{u} - \mathbf{u}^{id}}
\end{eqnarray*}
where $p: \Omega \rightarrow \mathbb{R}$ and $\mathbf{u}: \Omega \rightarrow \mathbb{R}^d$ are the true states. 
We use sufficient Gauss quadrature points for the integral evaluation to guarantee the accuracy of the computed errors. To simplify the notation, if there is no ambiguity, we omit the domain $\Omega$ and dimension $d$ in the following discussions.

In all forward problem experiments, we use the same \ac{nn} architecture as in~\cite{Badia2024}, i.e., 5 hidden layers with 50 neurons each, and $\texttt{tanh}$ as activation function. Unless otherwise specified, the default training losses are~\eqref{eq:maxwell_vec_loss} and~\eqref{eq:darcy_vec_loss} with the $\chi = \ell^2$ norm. We explicitly state when preconditioned losses~\eqref{eq:maxwell_precond_loss} and~\eqref{eq:darcy_precond_loss} are used, along with the type of preconditioners and dual norms.
Besides, the test \ac{fe} spaces we employ are always linearised, i.e., lowest-order \ac{fe} basis built upon refinement of the trial \ac{fe} mesh if the order of the trial space is not the lowest possible.

In addition to measuring the performance of the interpolated \acp{nn}, we are also interested in how well the \acp{nn} satisfy the Dirichlet boundary condition. We note that Dirichlet boundary condition is implicitly enforced to the \ac{nn} via the \ac{fe} residual minimisation.  
After the training process, in which the interpolated \ac{nn} is used to compute the loss and gradients, we obtain a \ac{nn} solution $\mathbf{u}_\mathcal{N}$. With this solution, we can evaluate the error of the interpolated \ac{nn}, i.e., $\pi_{h,0}(\mathbf{u}_\mathcal{N}) + \bar{\mathbf{u}}_h - \mathbf{u}$, or the \ac{nn} itself, i.e., $\mathbf{u}_{\mathcal{N}} - \mathbf{u}$, where $\mathbf{u}$ is the true solution. In each plot within this section, the label ``\ac{feinn}'' denotes results for the interpolation of the \ac{nn} solutions, while the label ``\ac{nn}'' indicates results for the \acp{nn} themselves (using very accurate quadratures to approximate integrals of \acp{nn}).
For reference, we display the \ac{fem} solution using Petrov-Galerkin discretisation in the plots, labelled as ``\ac{fem}''.

For all the forward and inverse problem experiments, we use the Glorot uniform method~\cite{Glorot2010} to initialise \ac{nn} parameters and the BFGS optimiser in \texttt{Optim.jl}~\cite{Optimjl2018} to train the \acp{nn}. 

\subsubsection{Maxwell's equation with a smooth solution} \label{subsubsec:maxwell_smooth}

For the first set of experiments, we consider the forward Maxwell problem with a smooth solution. We adopt most of the experimental settings from~\cite[Sect. 6.1]{Olm2019}, where the domain is the unit square $\Omega = [0,1]^2$ with a pure Dirichlet boundary ($\Gamma_D = \partial \Omega$). 
We pick $\mathbf{f}$ and $\mathbf{g}$ such that the true state is:
\begin{equation*}
    \mathbf{u}(x,y) = \begin{bmatrix}
        \cos(4.6x)\cos(3.4y) \\
        \sin(3.2x)\sin(4.8y)
    \end{bmatrix}.
\end{equation*}

We first fix the order of the trial basis functions, i.e., $k_U$ is fixed, and refine the mesh to study the convergence of the errors. Considering that the initialisation of \ac{nn} parameters may affect the results, at each mesh resolution, we repeat the experiment 10 times with different \ac{nn} initialisations. 
The results are shown in \fig{fig:fwd_smooth_maxwell_h_refin}. The dashed lines in the plots represent \ac{fem} errors. Different markers distinguish the results of \acp{nn} and their interpolations, and different colours indicate results for different $k_U$. \ac{fem} results exhibit the expected Galerkin's \ac{fem} theoretical error convergence rate 
of $O(h^{k_U})$ measured both in $L^2$ and $H(\bcurl)$ norms.

\begin{figure}[ht]
    \centering
    \begin{subfigure}{0.49\textwidth}
        \includegraphics[width=\textwidth]{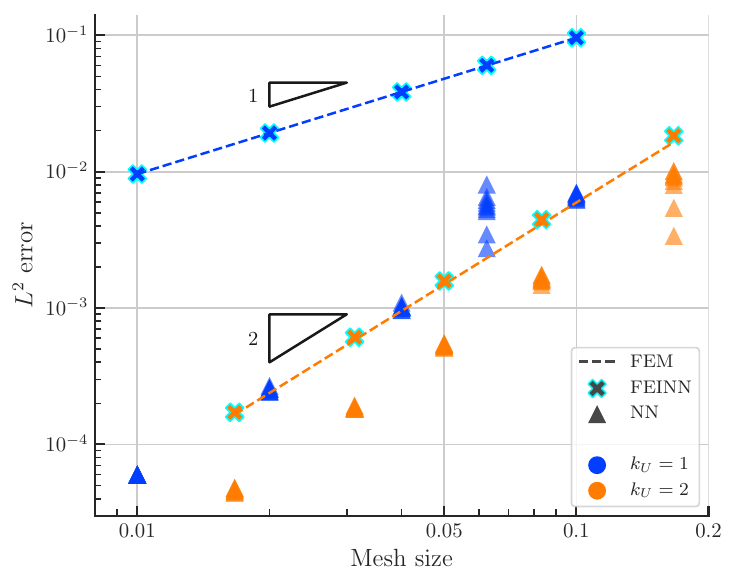}
        \caption{}
        \label{fig:fwd_smooth_maxwell_h_refin_l2_err}
    \end{subfigure}
    \begin{subfigure}{0.49\textwidth}
        \includegraphics[width=\textwidth]{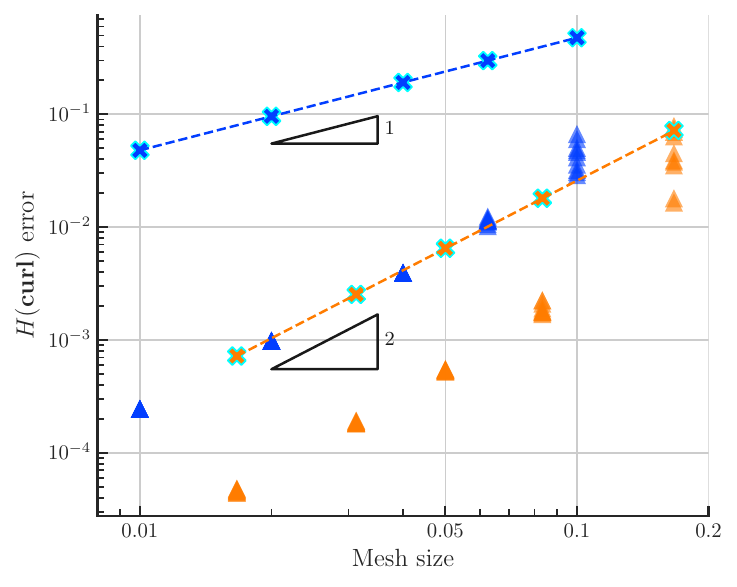}
        \caption{}
        \label{fig:fwd_smooth_maxwell_h_refin_hcurl_err}
    \end{subfigure}
     
    \caption{Error convergence of \ac{feinn} and \ac{nn} solutions with respect to the mesh size of the trial \ac{fe} space for the forward Maxwell problem with a smooth solution. Different colours represent different orders of trial bases.}
    \label{fig:fwd_smooth_maxwell_h_refin}
\end{figure}

Since the variance for \ac{feinn} results with the same mesh size and $k_U$ is negligible, we display the mean values of these 10 runs.
We observe that both the $L^2$ and $H(\bcurl)$ errors of \acp{feinn} consistently fall on the corresponding \ac{fem} lines for both $k_U=1$ and $k_U=2$, indicating successful training and accurate \ac{feinn} solutions. 
For the \acp{nn}, the errors are always below the corresponding \ac{fem} lines, suggesting that the \acp{nn} not only accurately satisfies the Dirichlet boundary condition but also provide more accurate results than the \ac{fem} solutions. 
For example, in \fig{fig:fwd_smooth_maxwell_h_refin_l2_err}, with $h=0.01$ and $k_U=1$, the \ac{nn} is more than 2 orders of magnitude more accurate than the \ac{fem} solution
Similar observations are found for the $H(\bcurl)$ error in \fig{fig:fwd_smooth_maxwell_h_refin_hcurl_err}. 
These findings extend the conclusion in~\cite{Badia2024} that \acp{nn} can outperform \ac{fem} solutions for the Poisson equation with smooth solutions to the Maxwell's equations.
For $k_U=2$, the difference between the \ac{nn} and the \ac{fem} results is less pronounced than for $k_U=1$, but \acp{nn} still beat the \ac{fem} solutions. 
Besides, with a fine enough mesh, and $k_U=1$, it is remarkable to find that the convergence rate of the \acp{nn} matches the $k_U=2$ \ac{fem} rate. This suggests that \acp{nn} can bring superconvergence in certain scenarios.
The improved accuracy of the \acp{nn} compared to the \ac{fem} solutions is likely attributed to the smoothness of the \ac{nn} solutions: the $\mathcal{C}^{\infty}$ NN solution (with the $\texttt{tanh}$ activation function) outperforms the $\mathcal{C}^0$ \ac{fem} solution when approximating a smooth true state.

\subsubsection{The effects of preconditioning on Maxwell's equation} \label{subsubsec:precond}
It is well-known that the condition number of the matrix arising from the \ac{fe} discretisation of Maxwell's problem grows with $k_U^2$ and $h^{-2}$. From preliminary experiments, we observe that \ac{nn} training becomes more difficult as $k_U$ increases, which is somehow expected by the condition number bounds, resulting in a more challenging optimisation problem.
In this experiment, we explore the possibility of using the dual norm of the residual, i.e., the preconditioned loss~\eqref{eq:maxwell_precond_loss}, to reduce the optimisation difficulty and improve training efficiency and accuracy.
We consider the same problem as in \sect{subsubsec:maxwell_smooth}, with $h=2^{-4}$ and $k_U=4$. 
We employ the $\mathbf{B}_{\mathrm{lin}}$ preconditioner, defined as the Gram matrix of the $H(\bcurl)$ inner product on the linearised \ac{fe} space (used as both trial and test spaces). Alternative preconditioners, e.g., the \ac{gmg} preconditioner for Maxwell's equations~\cite{Hiptmair1998}, could also be considered.

In \fig{fig:fwd_precond_maxwell_err_history}, we juxtapose the $H(\bcurl)$ error histories of \acp{nn} and their interpolations during training at the second step, using the dual norm (with preconditioning) and the Euclidean norm (without preconditioning) of the residual vector. To further improve training efficiency, we explore a two-step training strategy, in which we compute the \ac{fe} solution at the interpolation quadrature points as training data, and initialise the \acp{nn} by performing a data fitting task with this data. Next, we train the \acp{nn} with the preconditioned \ac{pde} loss~\eqref{eq:maxwell_precond_loss}.
Note that the first step is computationally much cheaper than the second. An initialisation step with 4,000 iterations is applied to the \acp{nn} before the \ac{pde} training. As a reference, we also display the results without initialisation and preconditioning (labelled as ``0k + N/A'' in the figure). We use the $H(\bcurl)$ (dual) norm and the $L^2$ norm of the Riesz representative of the residual in the preconditioned loss~\eqref{eq:maxwell_precond_loss}. 

The initialisation step significantly accelerates the convergence of the training process, as indicated by the lower initial errors in \fig{fig:fwd_precond_maxwell_err_history}.
Besides, we observe that unpreconditioned \ac{nn} errors spike at the beginning, while preconditioned \acp{nn} maintains a stable error reduction throughout the entire second step. This demonstrates that the well-defined preconditioned loss provides more stable training compared to the residual vector based loss.

More interestingly, as shown in \fig{fig:fwd_precond_maxwell_nn_err_history}, with the $H(\bcurl)$ norm, the \ac{nn} solution starts outperforming the \acp{fem} solution after around 1,500 iterations and achieves nearly an order of magnitude lower $H(\bcurl)$ error by 6,000 iterations.
The preconditioned \ac{pde} training serves as a post-processing step to improve \ac{fe} solution accuracy (more than one order of magnitude in the experiments) without increasing the mesh resolution or trial order.
In summary, the initialisation step, while not required, can lower computational cost and provide faster error reduction. Moreover, the $\mathbf{B}_{\mathrm{lin}}$ preconditioner, combined with $L^2$ or $H(\bcurl)$ norms in the loss~\eqref{eq:maxwell_precond_loss}, greatly accelerates \ac{nn} convergence and improves the accuracy of the trained \acp{nn}. These results suggest that these schemes can be effective for transient problems, where the \ac{nn} will be properly initialised with the solution at the previous time step.

\begin{figure}[ht]
    \centering
    \begin{subfigure}{0.49\textwidth}
        \includegraphics[width=\textwidth]{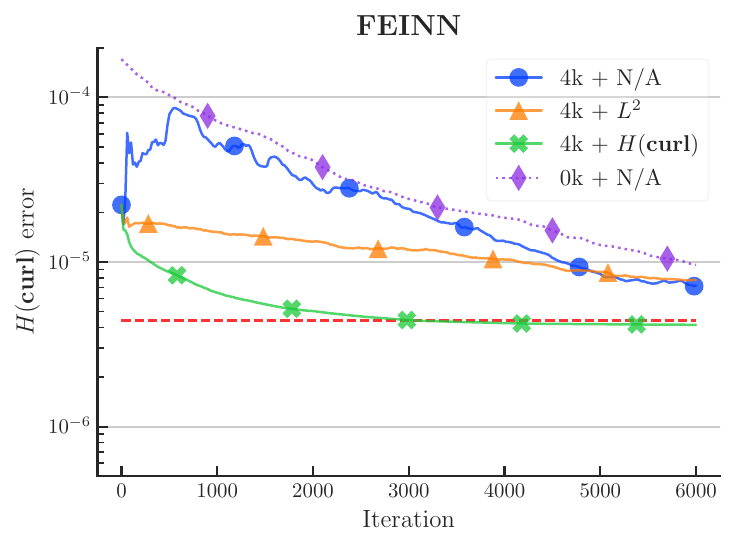}
        \caption{}
        \label{fig:fwd_precond_maxwell_feinn_err_history}
    \end{subfigure}
    \begin{subfigure}{0.49\textwidth}
        \includegraphics[width=\textwidth]{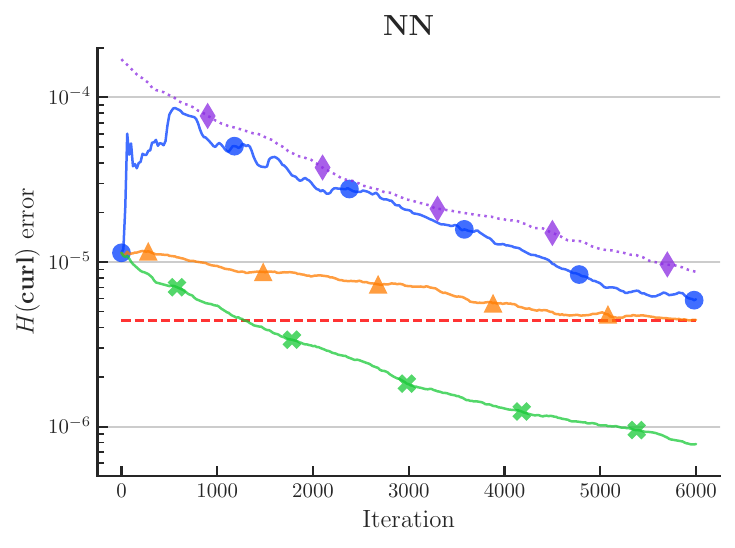}
        \caption{}
        \label{fig:fwd_precond_maxwell_nn_err_history}
    \end{subfigure}
     
    \caption{$H(\bcurl)$ error history of and \acp{feinn} and \acp{nn} during training using different dual norms in the preconditioned loss for the forward Maxwell problem with a smooth solution; ``N/A'' indicates no preconditioning. The number in the legends indicates iterations in the initialisation step. Mesh size $h=2^{-4}$ and order $k_U=4$ were used.}
    \label{fig:fwd_precond_maxwell_err_history}     
\end{figure}

With $\mathbf{u} \in \mathcal{C}^\infty(\bar{\Omega})$ and an effective preconditioner, we can now investigate how the error behaves with increasing $k_U$. 
We revisit the same problem as in \sect{subsubsec:maxwell_smooth}, keeping the mesh size $h$ fixed and increasing $k_U$ from 1 to 4. The errors versus $k_U$ are presented in \fig{fig:fwd_smooth_maxwell_p_refin}. For each order, we also run 10 experiments, and only the mean values of the \ac{feinn} errors are displayed. 
We employ the preconditioner $\mathbf{B}_{\mathrm{lin}}$ equipped with $H(\bcurl)$ dual norm during training.
We investigate two mesh sizes, $h=1/15$ and $h=1/30$, distinguished by different colours in the figure. 
We observe that, similar to the findings reported in~\cite{Badia2024}, the \ac{feinn} errors can still recover the corresponding \ac{fem} errors as $k_U$ increases, and the \ac{nn} errors are always below the \ac{fem} errors.
For instance, when $k_U=3$ and $h=1/30$, the \ac{nn} solutions outperform the \ac{fem} solution by more than one order of magnitude in both $L^2$ and $H(\bcurl)$ errors. 
Besides, preconditioners also enhance the accuracy of \ac{nn} solutions. For $k_U=2$ and $H(\bcurl)$ errors, the preconditioned \ac{nn} solutions, as shown in \fig{fig:fwd_smooth_maxwell_p_refin_hcurl_err}, surpass the \ac{fem} solution by two orders of magnitude. In contrast, without preconditioning, depicted in \fig{fig:fwd_smooth_maxwell_h_refin_hcurl_err}, the \ac{nn} solutions exceed the \ac{fem} solutions by only one order of magnitude.
It is noteworthy to mention that, as $k_U$ increases, the \ac{nn} errors approach the \ac{fem} errors, suggesting a potential need for enrichments in the \ac{nn} architecture. However, studying the effects of different \ac{nn} architectures is beyond the scope of this paper.

\begin{figure}[ht]
    \centering
    \begin{subfigure}{0.49\textwidth}
        \includegraphics[width=\textwidth]{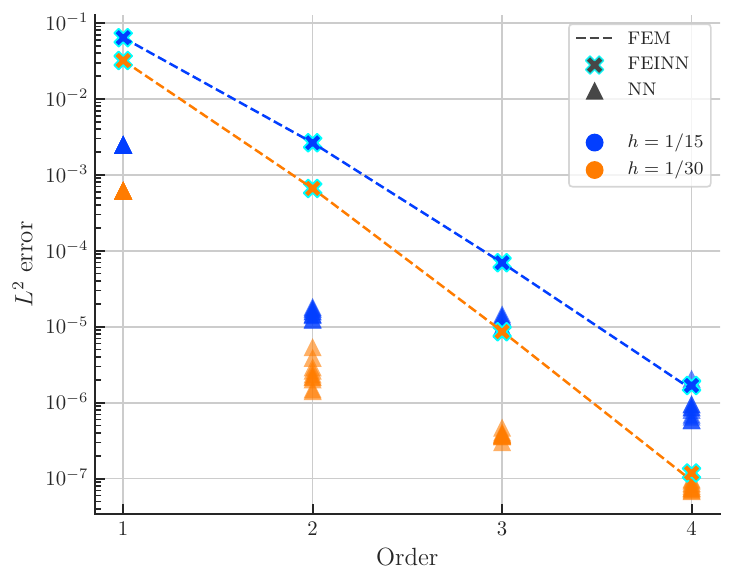}
        \caption{}
        \label{fig:fwd_smooth_maxwell_p_refin_l2_err}
    \end{subfigure}
    \begin{subfigure}{0.49\textwidth}
        \includegraphics[width=\textwidth]{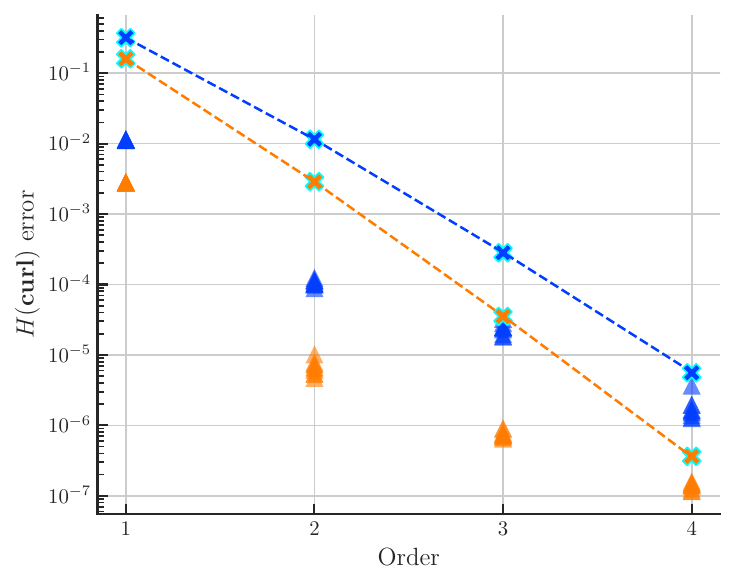}
        \caption{}
        \label{fig:fwd_smooth_maxwell_p_refin_hcurl_err}
    \end{subfigure}
     
    \caption{Error convergence of \ac{feinn} and \ac{nn} solutions with respect to the order of trial bases for the forward Maxwell problem with a smooth solution. Loss function subject to minimisation is equipped with preconditioner $\mathbf{B}_{\mathrm{lin}}$ and the $H(\bcurl)$ norm. Different colours represent different mesh sizes.}
    \label{fig:fwd_smooth_maxwell_p_refin}
\end{figure}

\subsubsection{Darcy equation on a sphere} \label{subsubsec:darcy}
We now attack a Darcy problem defined on the unit sphere $S=\{(x,y,z): x^2+y^2+z^2=1\}$. We refer to \sect{sec:method:tracefeinn} for the trace \ac{feinn} approach to solve this problem.
We adopt the following analytical solutions from~\cite[Sect. 5.1.1]{Rognes2013}:
\begin{equation*}
    \mathbf{u}(x, y, z) = \frac{1}{x^2+y^2+z^2} \begin{bmatrix} y^3z - 2x^2yz + yz^3 \\ x^3z - 2xy^2z + xz^3 \\ x^3y - 2xyz^2 + xy^3 \end{bmatrix}, \quad p(x, y, z) = -xyz.
\end{equation*}
The source term is $f(x, y, z) = -12xyz$. Note that the analytical $p$ satisfies the zero mean property on $S$, i.e., $\int_S p = 0$.

Since the problem has no boundary conditions, given a solution $(\mathbf{u},p)$, another valid solution can be obtained by adding an arbitrary constant to $p$. 
To ensure solution uniqueness, we use a (scalar) Lagrange multiplier to impose the global zero mean constraint.
This approach is widely used in the \ac{fem} community and it ensures that the discretisation matrix is invertible, making the \ac{fe} problem well-posed. 
We then revise the weak formulation in~\eqref{eq:darcy_weak} as follows: find $(\mathbf{u}, p, \lambda) \in U^2 \times \tilde{U}^3 \times \mathbb{R}$ such that
\begin{equation*}
    (\mathbf{u}, \mathbf{v}) - (p, \pmb{\nabla}_S \cdot \mathbf{v}) = 0 \quad \forall \mathbf{v} \in U^2, \quad
    (\pmb{\nabla}_S \cdot \mathbf{u}, q) + (\lambda, q) = (f, q) \quad \forall q \in \tilde{U}^3, \quad
    (p, \sigma) = 0 \quad \forall \sigma \in \mathbb{R}.
\end{equation*}
The corresponding \ac{fe} formulation is similar to~\eqref{eq:darcy_fe}.

To discretise the domain, we employ the so-called cubed sphere~\cite{Ronchi1996}. This mesh is available in the \texttt{GridapGeosciences.jl}~\cite{GridapGeosciencesGithub} package, a \texttt{Gridap.jl}'s extension for the numerical approximation of geophysical flows~\cite{Lee2024}.
In the experiments, each of the six panels of the cubed sphere is discretised with $n_e \times n_e$ quadrilateral elements, where $n_e$ is the number of elements on each side of the panel. Thus, the total number of elements is $6\times n_e\times n_e$. 

As mentioned in \sect{sec:method:tracefeinn}, we use two \acp{nn} $\mathbf{u}_\mathcal{N}$ and $p_\mathcal{N}$ to represent the flux and the pressure, respectively. We use first-order \ac{rt} spaces to interpolate $\mathbf{u}_\mathcal{N}$ and piecewise-constant \ac{dg} spaces to interpolate $p_\mathcal{N}$. 
The \ac{nn} interpolation onto \ac{dg} spaces is the same as that onto \ac{cg} spaces covered in~\cite{Badia2024,Badia2024adaptive}, which is just defined at the evaluation of $p_\mathcal{N}$ at the nodes of the \ac{fe} space. 

\fig{fig:fwd_darcy_results} shows the interpolated \ac{nn} solutions for the flux, its divergence, and the pressure on a cubed sphere mesh consisting of $6\times50\times50$ quadrilateral elements.
Note that the flux solution in \fig{fig:fwd_darcy_feinn_flux} is tangential to the sphere by construction of the \ac{rt} space.  In \fig{fig:fwd_darcy_feinn_div_flux}, the flux divergence displays discontinuities at element boundaries because \ac{rt} bases have $\mathcal{C}^0$ continuous normal components.
\fig{fig:fwd_darcy_feinn_pressure} demonstrates that the pressure solution exhibits a form of symmetry, likely leading to a zero mean on the sphere. 

\begin{figure}[ht]
    \centering
    \begin{subfigure}[t]{0.32\textwidth}
        \includegraphics[width=\textwidth]{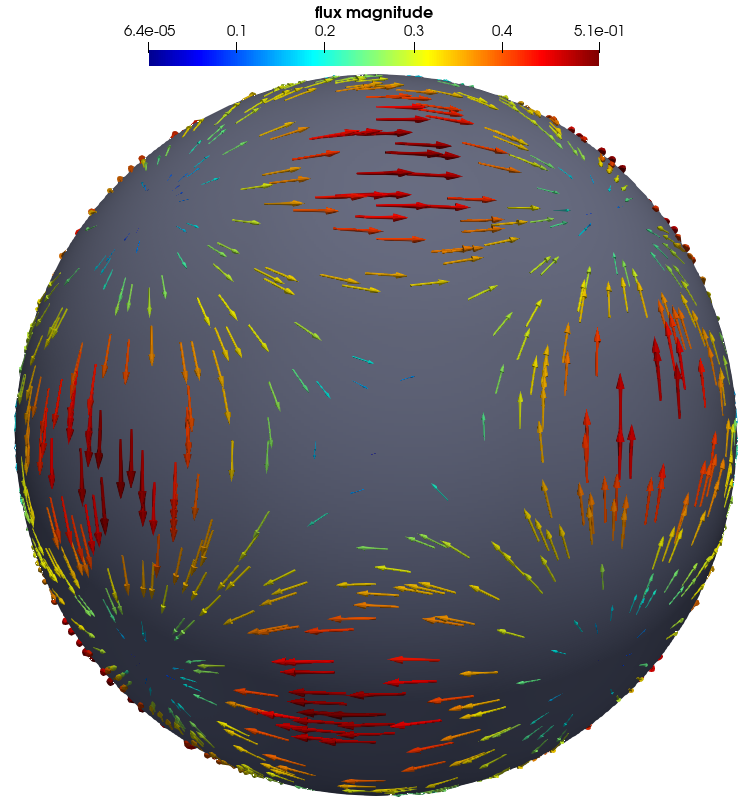}
        \caption{$\mathbf{u}^{id}$}
        \label{fig:fwd_darcy_feinn_flux}
    \end{subfigure}
    \begin{subfigure}[t]{0.32\textwidth}
        \includegraphics[width=\textwidth]{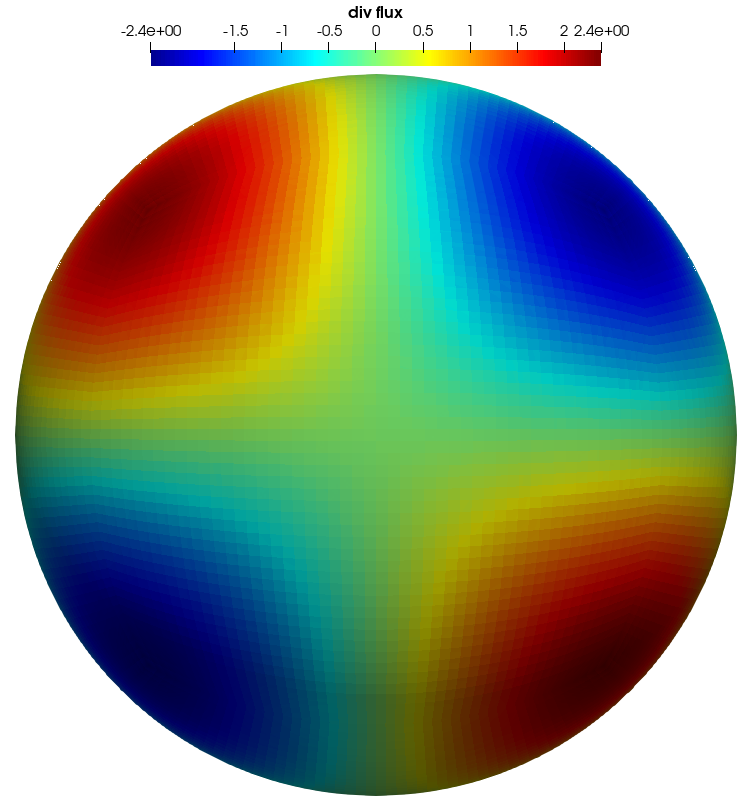}
        \caption{$\pmb{\nabla} \cdot \mathbf{u}^{id}$}
        \label{fig:fwd_darcy_feinn_div_flux}
    \end{subfigure}
    \begin{subfigure}[t]{0.32\textwidth}
        \includegraphics[width=\textwidth]{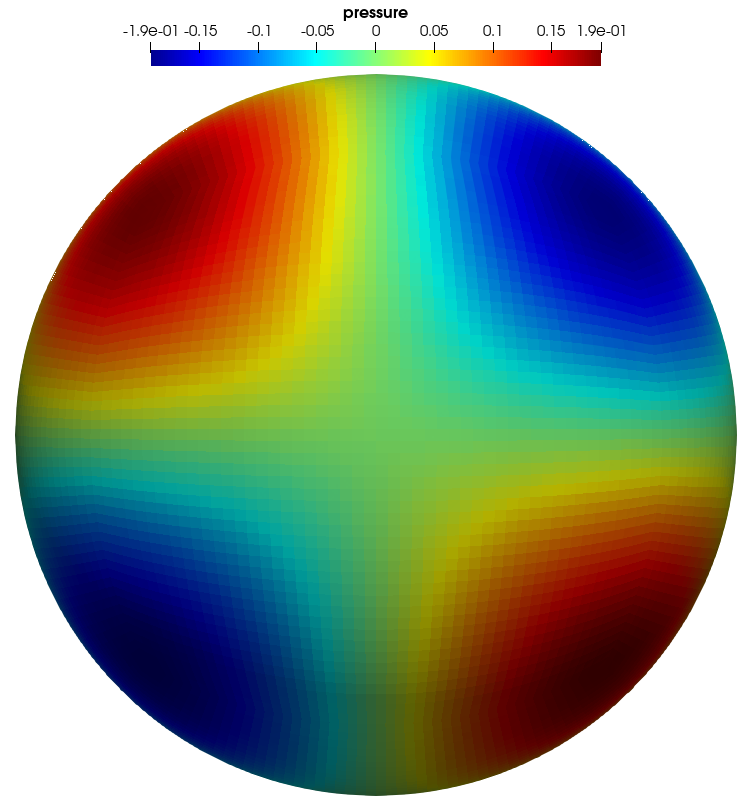}
        \caption{$p^{id}$}
        \label{fig:fwd_darcy_feinn_pressure}
    \end{subfigure}
     
    \caption{Interpolated \ac{nn} solutions on a $6\times50\times50$ grid for the forward Darcy problem on a sphere.}
    \label{fig:fwd_darcy_results}
\end{figure}

Next, we analyse the convergence of the trace \ac{feinn} method with respect to mesh size ($h\approx2/n_e$). 
The errors of the interpolated \acp{nn} are plotted in \fig{fig:fwd_darcy_h_refin_errors}. Again, each marker for \acp{feinn} represents the mean value from 10 independent experiments.
It is noteworthy to mention that, the flux \ac{nn} solution is not constrained to be tangential to the sphere. 
However, its interpolation onto the surface \ac{fe} space (using \ac{rt} elements and Piola maps) belongs to the tangent bundle.
By interpolating the trained \acp{nn} onto a \ac{fe} space with higher-order bases and a finer mesh, we observed that one can achieve more accurate solutions, {\em even if a coarser mesh and order were used for training}. A similar phenomenon occurs in data science, such as image super-resolution~\cite{Dong2014}, where \acp{nn} trained on lower-resolution images can generate higher-resolution outputs. Specifically, we interpolate the trained flux \acp{nn} onto a \ac{fe} space of order 4 and a mesh of $6\times200\times200$ quadrilateral elements.
For convenience, in \fig{fig:fwd_darcy_h_refin_errors} and the rest of the section, we refer to both the trained pressure \acp{nn} and the interpolations of the trained flux \acp{nn} onto a finer mesh as \ac{nn} solutions, labelled as ``\ac{nn}*''.

For the interpolated \acp{nn}, \fig{fig:fwd_darcy_h_refin_errors} shows that they successfully match the \ac{fem} solutions for both flux and pressure across various mesh resolutions, with all the \ac{feinn} markers aligning on their corresponding \ac{fem} lines.
Besides, the \ac{nn} solutions are more accurate than the \ac{fem} solutions, especially for the flux: both $L^2$ and $H(\bdiv)$ errors are more than two orders of magnitude lower than corresponding \ac{fem} errors. Besides, a visual comparison of the slopes in \fig{fig:fwd_darcy_refin_l2_err} and \fig{fig:fwd_darcy_refin_hdiv_err} indicates that \ac{nn} solutions exhibit a higher convergence rate than \ac{fem} solutions.
These findings highlight the potential of \acp{nn} in solving surface \acp{pde} more efficiently by training on a coarse mesh and then interpolating onto a fine mesh for the final solution.

\begin{figure}[ht]
    \centering
    \begin{subfigure}{0.49\textwidth}
        \includegraphics[width=\textwidth]{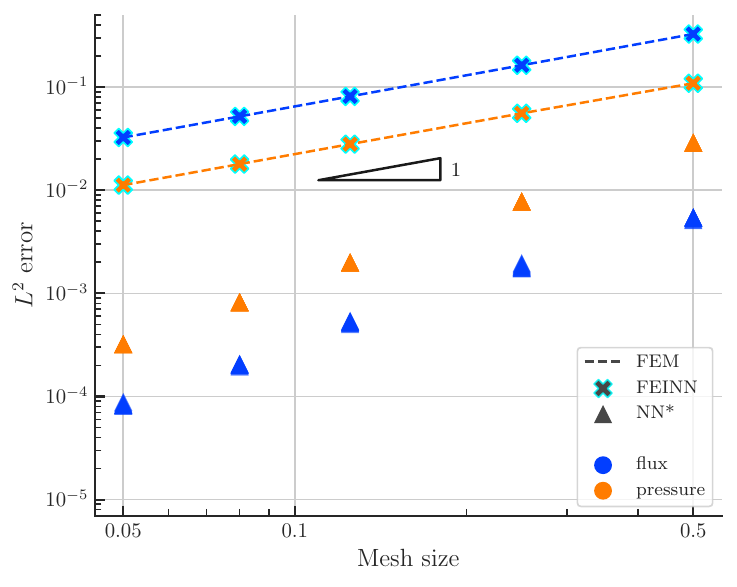}
        \caption{}
        \label{fig:fwd_darcy_refin_l2_err}
    \end{subfigure}
    \begin{subfigure}{0.49\textwidth}
        \includegraphics[width=\textwidth]{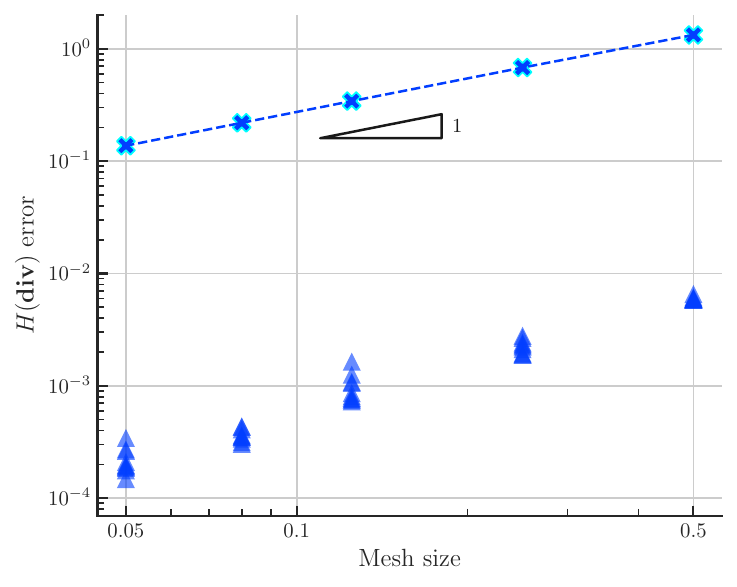}
        \caption{}
        \label{fig:fwd_darcy_refin_hdiv_err}
    \end{subfigure}
     
    \caption{Error convergence of \ac{feinn} and \ac{nn} solutions with respect to the mesh size of the trial \ac{fe} space for the forward Darcy problem on the unit sphere. Colours distinguish between flux and pressure.} 
    \label{fig:fwd_darcy_h_refin_errors}
\end{figure}

\subsubsection{Adaptive training for Maxwell's equation with localised features} \label{subsubsec:adaptive_feinn}
We conclude the forward experiments section by employing the $h$-adaptive \ac{feinn} method proposed in~\cite{Badia2024adaptive} to tackle forward Maxwell problems with localised features. We consider both sharp features and singularities in the solution. We first focus on the following analytic solution defined in the unit square domain $\Omega=[0,1]^2$ with a pure Dirichlet boundary:
\begin{equation*}
    \mathbf{u}(x,y) = \begin{bmatrix}
        \arctan(50(\sqrt{(x + 0.05)^2 + (y + 0.05)^2} - 1.2)) \\
        1.5\exp(-500((x - 0.5)^2 + (y - 0.5)^2))
    \end{bmatrix}.
\end{equation*}
Note that the x-component of $\mathbf{u}$ has a circular wave front centred at $(-0.05, -0.05)$ with a radius of 1.2, while the y-component features a sharp peak at $(0.5, 0.5)$. These sharp features are typically benchmarks for evaluating the performance of numerical methods equipped with adaptive mesh refinement.

We adopt the train, estimate, mark, and refine strategy proposed in~\cite{Badia2024adaptive} to adaptively refine the mesh and train \acp{feinn}.
Preconditioning is applied in the loss function to improve the training stability and to speed up the convergence. 
We use the $L^2$ norm in the preconditioned loss function~\eqref{eq:maxwell_precond_loss} and, similar to~\cite{Badia2024adaptive} and the previous experiment, we utilise the preconditioner $\mathbf{B}_{\mathrm{lin}}$.
We use different error indicators to guide mesh adaptation. Namely, the $L^2$ error among the interpolated \ac{nn} and the analytical solution, the $L^2$ norm of the difference between the \ac{nn} and its \ac{fe} interpolation, and the $L^2$ norm of the strong \acp{pde} residual evaluated at the \ac{nn}. 
These are labelled as ``real'', ``integration'', and ``network'', respectively, in the figures below.
We leverage forest-of-octrees meshes for adaptivity, which are typically non-conforming. To maintain curl-conforming property in the \ac{fe} space, we incorporate additional linear multi-point constraints for slave \acp{dof}. For a more detailed discussion on $h$-adaptive \ac{feinn}, we refer to~\cite{Badia2024adaptive}. 

In the experiments, we fix $k_U=2$, with the initial trial \ac{fe} space consisting of $16\times16$ quadrilateral elements. We refine the mesh up to 7 times, each with a 10\% refinement ratio (i.e., we refine 10\% of the cells with the largest error indicator), and run 20 independent experiments initialised with different \ac{nn} parameters for each error indicator.
The training iterations for each step are $[100, 100, 200, 200, 300, 300, 400, 400]$, totalling 2000 iterations.

In \fig{fig:fwd_adaptive_maxwell_indicator_comparison}, we present the convergence of the \ac{nn} errors with respect to the refinement step. 
\fig{fig:fwd_maxwell_adapt_refin_l2_err} shows errors measured in the $L^2$ norm, while \fig{fig:fwd_maxwell_adapt_refin_hcurl_err} in the $H(\bcurl)$ norm. 
The solid line represents the median of the errors, while the band represents the range from the minimum to the 90th percentile across 20 independent runs. 
The red dashed line illustrates the error of the $h$-adaptive \ac{fem} solution, using the real \ac{fem} $H(\bcurl)$ error as the refinement indicator.

\begin{figure}[ht]
    \centering
    \begin{subfigure}{0.49\textwidth}
        \includegraphics[width=\textwidth]{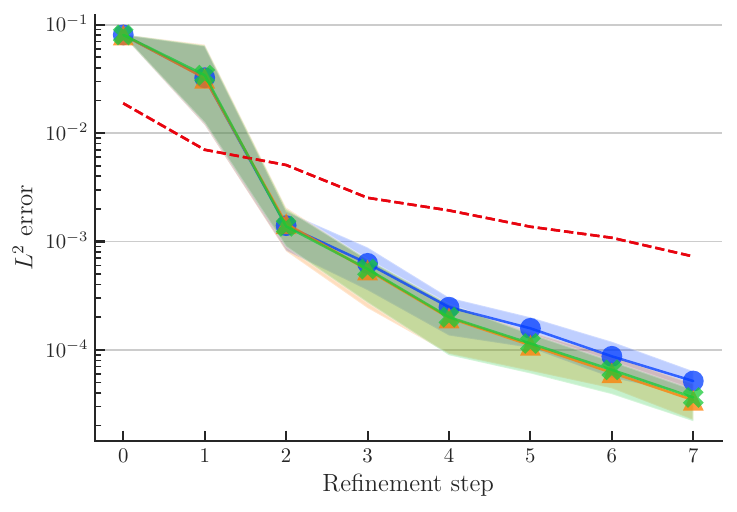}
        \caption{}
        \label{fig:fwd_maxwell_adapt_refin_l2_err}
    \end{subfigure}
    \begin{subfigure}{0.49\textwidth}
        \includegraphics[width=\textwidth]{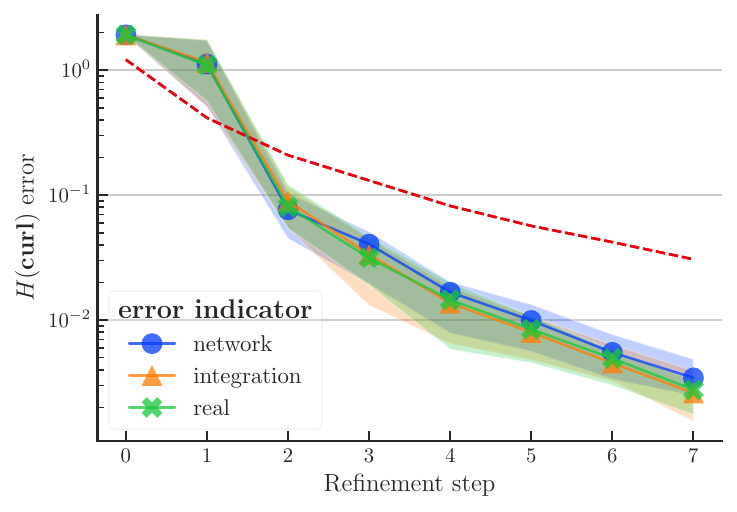}
        \caption{}
        \label{fig:fwd_maxwell_adapt_refin_hcurl_err}
    \end{subfigure}
     
    \caption{Error convergence of \ac{nn} solutions with respect to refinement steps for the forward Maxwell problem with less smooth solution, using different error indicators. The solid line denotes the median, and the band represents the range from the minimum to the 90th percentile across 20 independent runs. The red dashed line illustrates the $h$-adaptive \ac{fem} solution error using the real \ac{fem} $H(\bcurl)$ error as the refinement indicator.} 
    \label{fig:fwd_adaptive_maxwell_indicator_comparison}
\end{figure}

Although the trained \acp{nn} start with higher errors, they begin to outperform the $h$-adaptive \ac{fem} solution after the first two refinement steps. As shown in both \fig{fig:fwd_maxwell_adapt_refin_l2_err} and \fig{fig:fwd_maxwell_adapt_refin_hcurl_err}, the \ac{nn} errors consistently fall below the \ac{fem} error lines after the second refinement.
This observation agrees with the findings in~\cite{Badia2024adaptive}, where \acp{nn} can beat $h$-adaptive \ac{fem} for smooth analytic solutions. 
However, in~\cite{Badia2024adaptive}, \ac{nn} solutions are noted to be superior mainly in $H^1$-error for Poisson equations, but our Maxwell experiments show lower errors in both $L^2$ and $H(\bcurl)$ norms.
Furthermore, the network and integration error indicators effectively guides mesh refinements, as reflected by the closely aligned convergence curves across different indicators. A similar finding was also reported in~\cite{Badia2024adaptive} for the Poisson problems.

Next, we increase the difficulty of the problem by introducing singular feature in the solution. The analytic solution is now defined in the domain $\Omega=[0,1]^2$ as follows:
\begin{equation*}
    \mathbf{u}(x, y) = \pmb{\nabla}(r^{\frac{2}{3}}\sin(\frac{2\theta}{3})), \quad r = \sqrt{x^2 + y^2}, \quad \theta = \arctan(\frac{y}{x}).
\end{equation*}
This solution exhibits a singularity at the origin and serves as a popular benchmark for testing adaptive methods (see, e.g.,~\cite{Olm2019}). Most of the settings remain the same as the previous experiment, except for the following adjustments: the initial mesh comprises $20\times20$ quadrilateral elements, the $H(\bcurl)$ norm is used in the preconditioned loss function, and the training iterations are in total 5,000 with $[500, 750, 1000, 1250, 1500]$ iterations for each step. 

\fig{fig:fwd_adaptive_maxwell_singular_indicator_comparison} displays the convergence of the \ac{nn} errors across refinement step from 20 independent runs. The \ac{nn} errors are computed by interpolating the trained \acp{nn} onto a second-order \ac{fe} space on a finer uniform mesh. Despite the low regularity, the \ac{nn} solutions consistently match the adaptive \ac{fem} solutions at each refinement step for both $L^2$ and $H(\bcurl)$ errors. The figure also highlights that the proposed network and integration error indicators effectively guide mesh refinements for this singular problem, achieving  performance comparable to the real error that is barely available in practice.
Overall, \fig{fig:fwd_adaptive_maxwell_indicator_comparison} and \fig{fig:fwd_adaptive_maxwell_singular_indicator_comparison} illustrate the effectiveness of the $h$-adaptive \ac{feinn} method in solving $H(\bcurl)$ problems with sharp features and singularities. 

\begin{figure}[ht]
    \centering
    \begin{subfigure}{0.49\textwidth}
        \includegraphics[width=\textwidth]{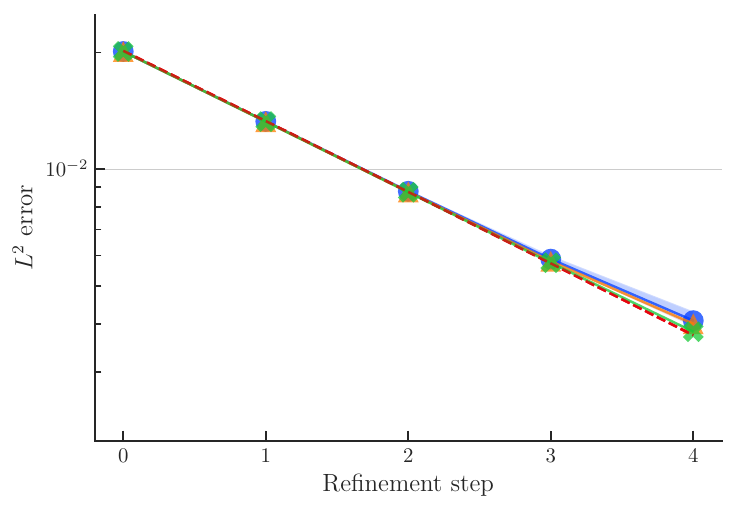}
        \caption{}
        \label{fig:fwd_maxwell_singular_adapt_refin_l2_err}
    \end{subfigure}
    \begin{subfigure}{0.49\textwidth}
        \includegraphics[width=\textwidth]{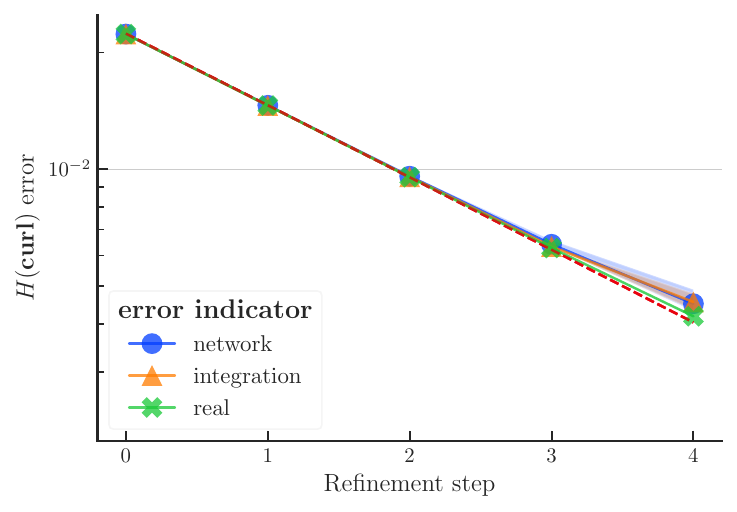}
        \caption{}
        \label{fig:fwd_maxwell_singular_adapt_refin_hcurl_err}
    \end{subfigure}
     
    \caption{Error convergence of \ac{nn} solutions with respect to refinement steps for the forward Maxwell problem with singularity, using different error indicators. See the caption of \fig{fig:fwd_adaptive_maxwell_indicator_comparison} for details about the information displayed in this figure.}
    \label{fig:fwd_adaptive_maxwell_singular_indicator_comparison}
\end{figure}

\subsection{Inverse problems} \label{subsec:inverse_exp}
In this section, we present the results of the inverse Maxwell problems, focusing on situations with partial or noisy observations of the state $\mathbf{u}$ and a fully-unknown coefficient $\kappa$. We compare the performance of compatible \acp{feinn} with adjoint \acp{nn} for both types of observations. 
The adjoint \ac{nn} method was first proposed in~\cite{Berg2021}, and then further explored in~\cite{Mitusch2021}. 
This method approximates the unknown coefficient with a \ac{nn} and uses the adjoint method to compute the gradient of the data misfit with respect to the \ac{nn} parameters.
While previous works~\cite{Berg2021,Mitusch2021,Badia2024} mainly focus on inverse Poisson problems, in this work we extend the comparison to inverse Maxwell's problems.

For the compatible \ac{feinn} method, we employ two \acp{nn}, $\mathbf{u}_\mathcal{N}$ and $\kappa_\mathcal{N}$, to approximate the state and coefficient, respectively. 
The adjoint \ac{nn} method only requires a single \ac{nn}, $\kappa_\mathcal{N}$, to represent the unknown coefficient. To ensure a fair comparison, we use the same \ac{nn} architecture for $\kappa_\mathcal{N}$ in both methods.
In the plots, the label ``Adjoint\ac{nn}'' denotes results from the adjoint \ac{nn} method; the label ``\ac{feinn}'' indicates interpolated \ac{nn} results from the compatible \ac{feinn} method. The label tag ``\ac{nn} only'' for \acp{feinn} represents results associated with \acp{nn} themselves. 
Consistent with~\cite{Badia2024}, we use the $\texttt{softplus}$ activation function, and apply a rectification function $r(x)=|x|+0.01$ as the activation for the output layer of $\kappa_\mathcal{N}$ to ensure its positivity.

We adopt the three-step training strategy proposed in~\cite{Badia2024} for solving inverse problems with \acp{feinn}. 
First, we initialise $\mathbf{u}_\mathcal{N}$ by training it with the observations. Then, we fix $\mathbf{u}_\mathcal{N}$ and train $\kappa_\mathcal{N}$ using only the \ac{pde} loss. Finally, we fine-tune both $\mathbf{u}_\mathcal{N}$ and $\kappa_\mathcal{N}$ with the full loss~\eqref{eq:inverse_loss}.
The final step contains multiple substeps with increasing penalty coefficients for the \ac{pde} term in the loss. We adhere to the convention used in~\cite{Badia2024} to denote the training iterations and penalty coefficients. 
For example, $[100, 50, 3\times400]$ iterations means that the first step involves 100 iterations, the second step has 50 iterations, and the third step comprises three substeps, each with 400 iterations. Similarly, $\alpha=[0.1,0.2,0.3]$ denotes that the penalty coefficients for those substeps are 0.1, 0.2, and 0.3, respectively.

We use the following three relative errors to evaluate the accuracy of the identified state $\mathbf{u}^{id}$ and coefficient $\kappa^{id}$:
\begin{equation*}
    \varepsilon_{L^2(\Omega)^d}(\mathbf{u}^{id}) = \frac{\ltwonormd{\mathbf{u}^{id} - \mathbf{u}}}{\ltwonormd{\mathbf{u}}}, \  
    \varepsilon_{H(\bcurl; \Omega)}(\mathbf{u}^{id}) = \frac{\hcurlnorm{\mathbf{u}^{id} - \mathbf{u}}}{\hcurlnorm{\mathbf{u}}}, \ 
    \varepsilon_{L^2(\Omega)}(\kappa^{id}) = \frac{\ltwonorm{\kappa^{id} - \kappa}}{\ltwonorm{\kappa}},
\end{equation*}
where $\mathbf{u}: \Omega \rightarrow \mathbb{R}^d$ and $\kappa: \Omega \rightarrow \mathbb{R}^+$ are the true coefficient and state, respectively.

\subsubsection{Partial observations} \label{subsubsec:inv_partial_obs}
In this section, we tackle an inverse Maxwell problem with partial observations of the state. The problem is defined on the unit square $\Omega=[0,1]^2$, with pure Dirichlet boundary conditions. We assume that only the $x$-component of the magnetic field $\mathbf{u}$ is observed, with a total of $70^2$ observations distributed uniformly within the region $[0.005,0.995]^2$. 
This region selection avoids observations on the Dirichlet boundary, as the state is fully known there.
In addition, one-component observations of a vector-valued field are common in practice~\cite{Oberai2003}.
Note that with too few observations, the problem becomes highly ill-posed, leading to identified solutions that diverge significantly from the true ones for both methods. 
The analytical state $\mathbf{u}$ (\fig{fig:partial_obs_true_state}) and coefficient $\kappa$ (\fig{fig:partial_obs_true_coeff}) are defined as follows:
\begin{equation*}
    \mathbf{u}(x, y) = \begin{bmatrix} \cos(\pi x)\cos(\pi y) \\ \sin(\pi x)\sin(\pi y) \end{bmatrix}, \quad \kappa(x, y) = 1 + 5\mathrm{e}^{-5((2x - 1)^2 + (y - 0.5)^2)}.
\end{equation*}

\begin{figure}[ht]
    \centering
    \begin{subfigure}[t]{0.32\textwidth}
        \includegraphics[width=\textwidth]{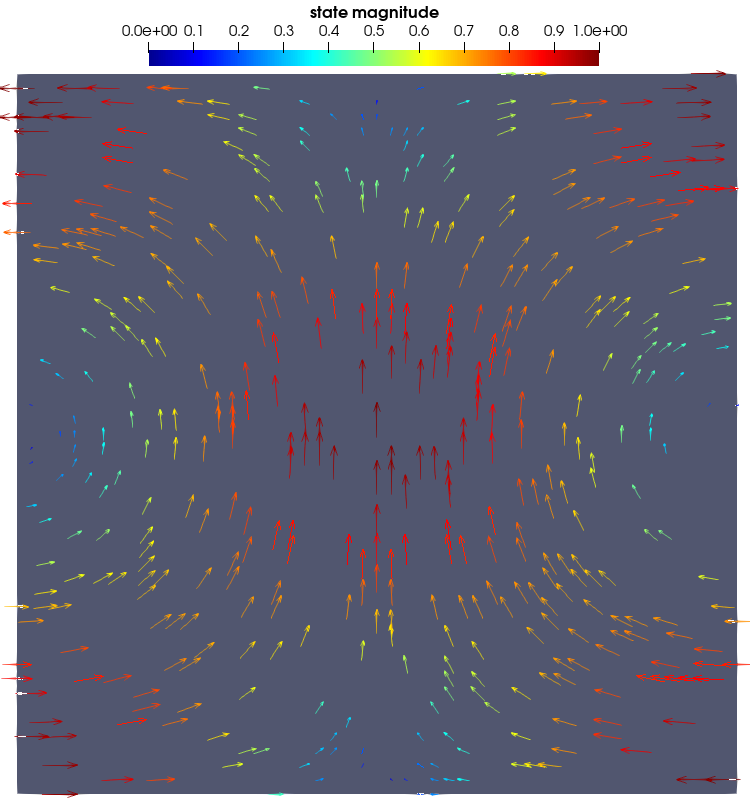}
        \caption{$\mathbf{u}$}
        \label{fig:partial_obs_true_state}
    \end{subfigure}
    \begin{subfigure}[t]{0.32\textwidth}
        \includegraphics[width=\textwidth]{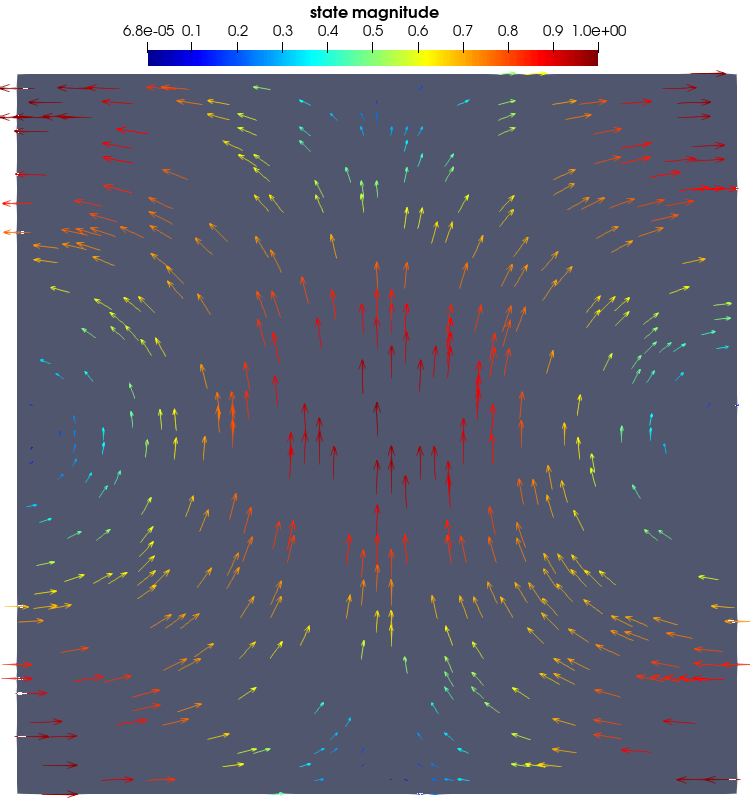}
        \caption{$\mathbf{u}^{id}$}
        \label{fig:partial_obs_feinn_state}
    \end{subfigure}
    \begin{subfigure}[t]{0.32\textwidth}
        \includegraphics[width=\textwidth]{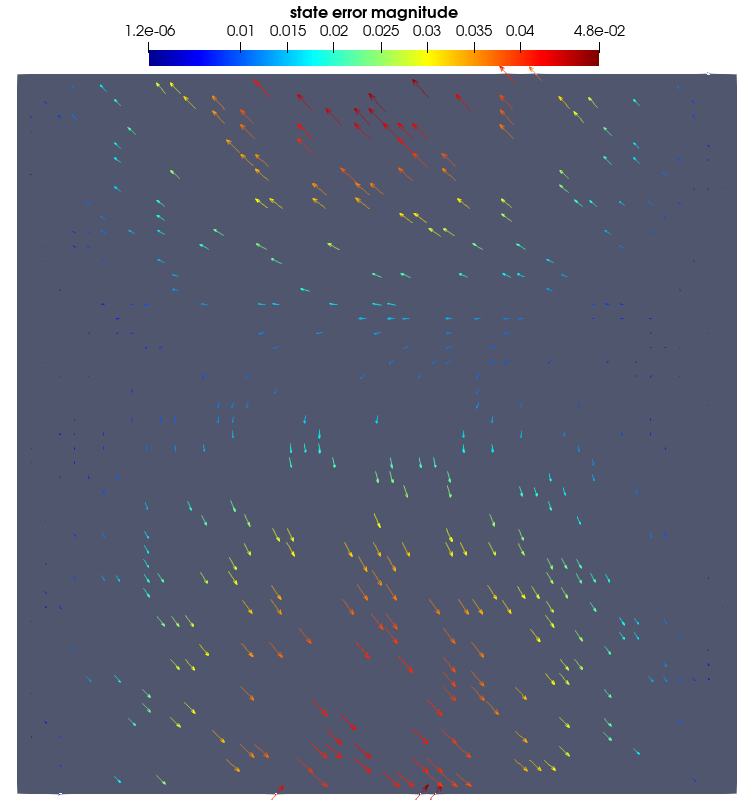}
        \caption{$\mathbf{u}^{id} - \mathbf{u}$}
        \label{fig:partial_obs_state_error}
    \end{subfigure}

    \centering
    \begin{subfigure}[t]{0.32\textwidth}
        \includegraphics[width=\textwidth]{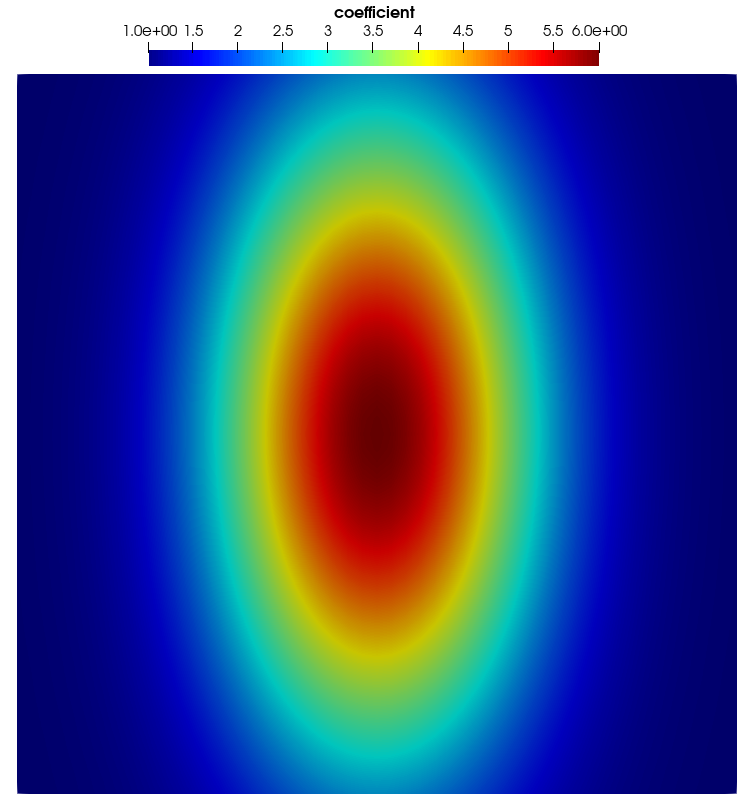}
        \caption{$\kappa$}
        \label{fig:partial_obs_true_coeff}
    \end{subfigure}
    \begin{subfigure}[t]{0.32\textwidth}
        \includegraphics[width=\textwidth]{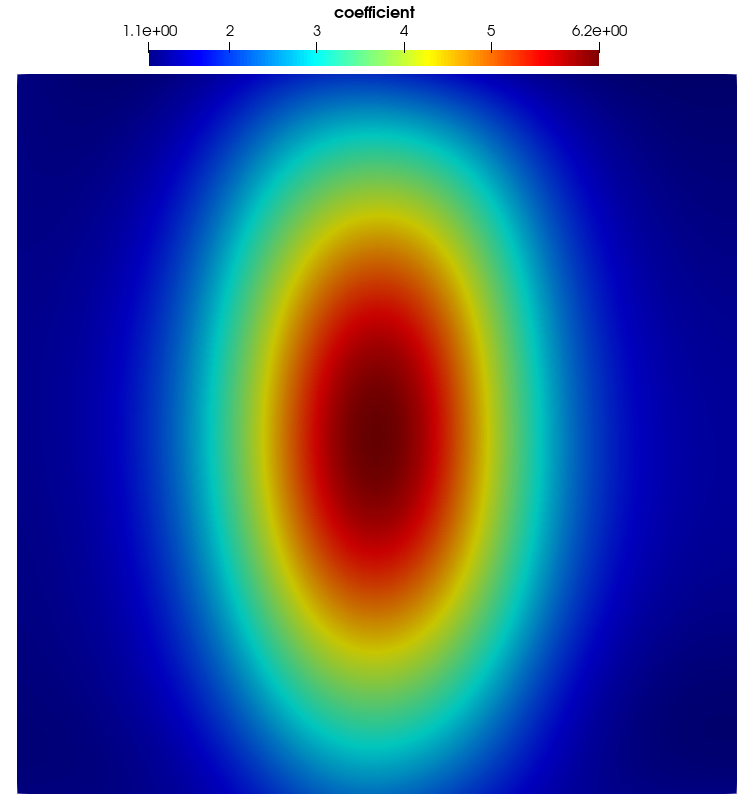}
        \caption{$\kappa^{id}$}
        \label{fig:partial_obs_feinn_coeff}
    \end{subfigure}
    \begin{subfigure}[t]{0.32\textwidth}
        \includegraphics[width=\textwidth]{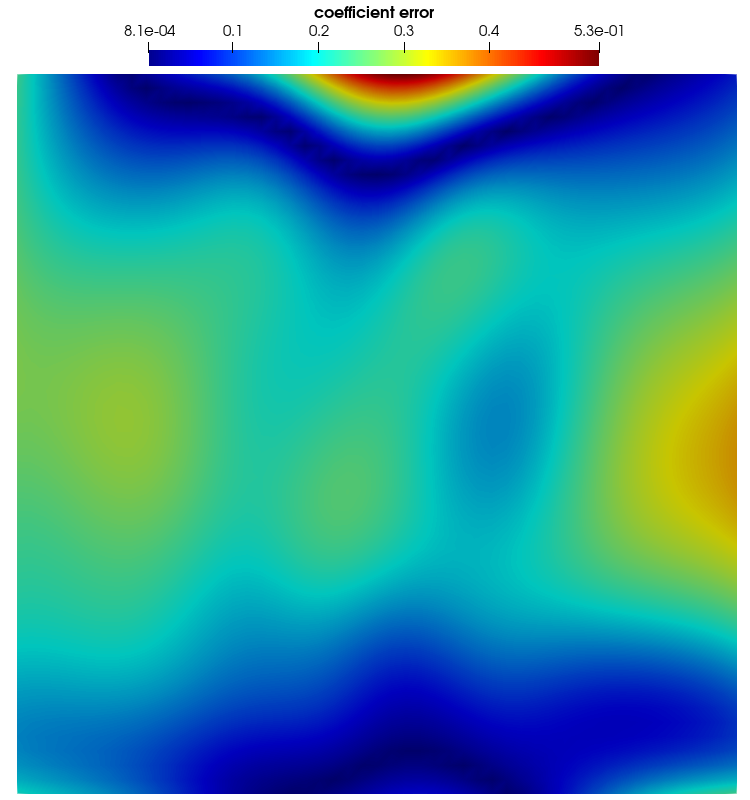}
        \caption{$|\kappa^{id} - \kappa|$}
        \label{fig:partial_obs_coeff_error}
    \end{subfigure}
     
    \caption{Illustration of known analytical solutions (first column), compatible \ac{feinn} solutions (second column), and corresponding point-wise errors (third column) for the inverse Maxwell problem with partial observations. Results correspond to an experiment with a specific \ac{nn} initialisation. The first row depicts the state, while the second row represents the coefficient.}
    \label{fig:inv_partial_obs_results}
\end{figure}

The domain is discretised by $50\times50$ quadrilateral elements. We employ first-order N{\'e}d{\'e}lec elements for $\mathbf{u}_\mathcal{N}$ interpolation, and first-order \ac{cg} elements for $\kappa_\mathcal{N}$ interpolation. The \ac{nn} structures are almost identical: both have 2 hidden layers with 20 neurons each, but the output layer of $\mathbf{u}_\mathcal{N}$ has 2 neurons, while the output layer of $\kappa_\mathcal{N}$ has 1 neuron activated by the rectification function $r(x)$.
For compatible \acp{feinn}, the training iterations for the three steps are $[150, 50, 3\times600]$, totalling 2,000 iterations, and the penalty coefficients are $\alpha=[0.001,0.003,0.009]$. The total training iterations for the adjoint \ac{nn} method is also 2,000.

In the second column of \fig{fig:inv_partial_obs_results}, we present the \ac{feinn} solutions for the state and coefficient, and their corresponding errors are displayed in the third column. The state error (see \fig{fig:partial_obs_state_error}) is reasonably small compared to the true state (see \fig{fig:partial_obs_true_state}), and the identified coefficient (see \fig{fig:partial_obs_feinn_coeff}) is visually very close to the true coefficient (\fig{fig:partial_obs_true_coeff}).

In \fig{fig:inv_partial_obs_training_history}, we compare the relative errors between compatible \ac{feinn} and adjoint \ac{nn} solutions. Note that the $\varepsilon_{L^2}(\kappa^{id})$ for both methods starts with the same value, as the initialisations for $\kappa_\mathcal{N}$ are identical.
Compatible \acp{feinn} converge faster than adjoint \acp{nn} as all three \ac{feinn} error curves fall below the corresponding adjoint \ac{nn} error curves after 400 iterations.
On the downside, the convergence of compatible \acp{feinn} is not as stable as that of adjoint \acp{nn}, as the errors for the former fluctuate during training. This raises concerns about whether the superior performance of compatible \acp{feinn} over adjoint \acp{nn} is due to a specific \ac{nn} initialisation. We investigate this in the following experiments.

\begin{figure}[ht]
    \centering
    \includegraphics[width=\textwidth]{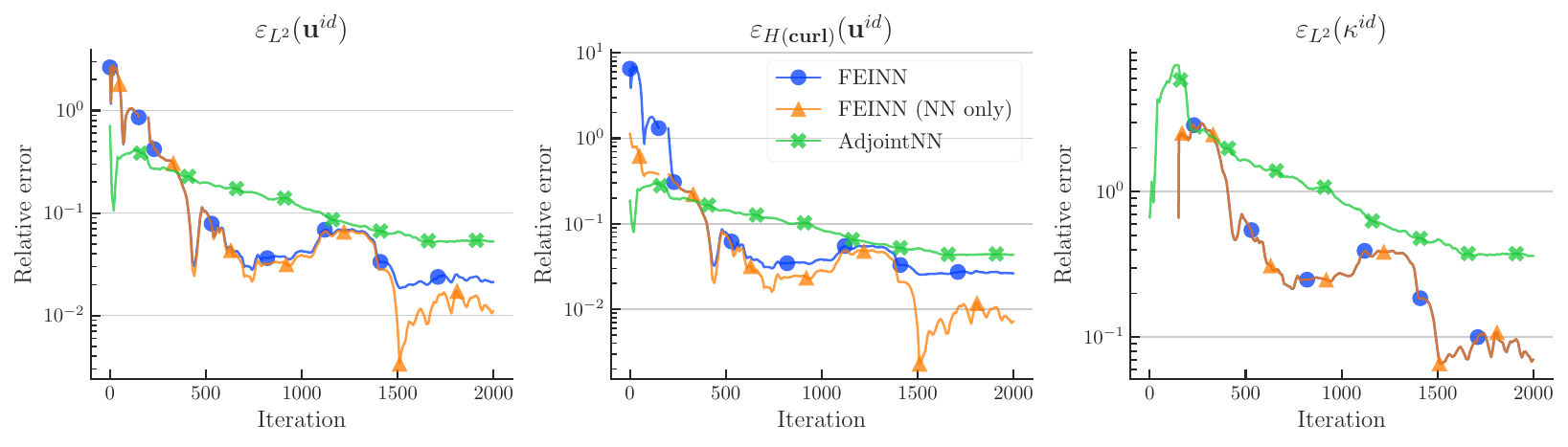}
    \caption{Comparison among \acp{feinn} and adjoint \acp{nn} in terms of relative errors during training for the inverse Maxwell problem with partial observations. Both optimisation loops were run for 2,000 iterations.}
    \label{fig:inv_partial_obs_training_history}
\end{figure}

We repeat the experiment 100 times with different \ac{nn} initialisations for both \acp{feinn} and adjoint \acp{nn}. The resulting box plots for the relative errors are presented in \fig{fig:inv_partial_obs_methods_cmp_boxplot}, where whiskers represent the minimum and maximum values within 1.5 times the interquartile range.
In this scenario with partial observations, the \ac{feinn} method demonstrates greater stability and accuracy compared to the adjoint \ac{nn} method. The \ac{feinn} boxes for all three relative errors are more tightly clustered and positioned lower than the adjoint \ac{nn} boxes.
Looking at the \acp{nn} themselves resulting from the \acp{feinn} method, the state \acp{nn} are more accurate but less stable compared to their interpolation counterparts. This is evident from \fig{fig:inv_partial_obs_methods_cmp_boxplot}, where the \ac{nn} error boxes are lower but more scattered than the interpolated \ac{nn} error boxes.
In line with the findings in~\cite{Badia2024}, without interpolation, the \ac{nn} coefficient errors are roughly the same as their interpolation counterparts. 
Overall, the \ac{feinn} method exhibit higher reliability than the adjoint \ac{nn} method for solving the inverse Maxwell problem with partial observations.

\begin{figure}[ht]
    \centering
    \includegraphics[width=\textwidth]{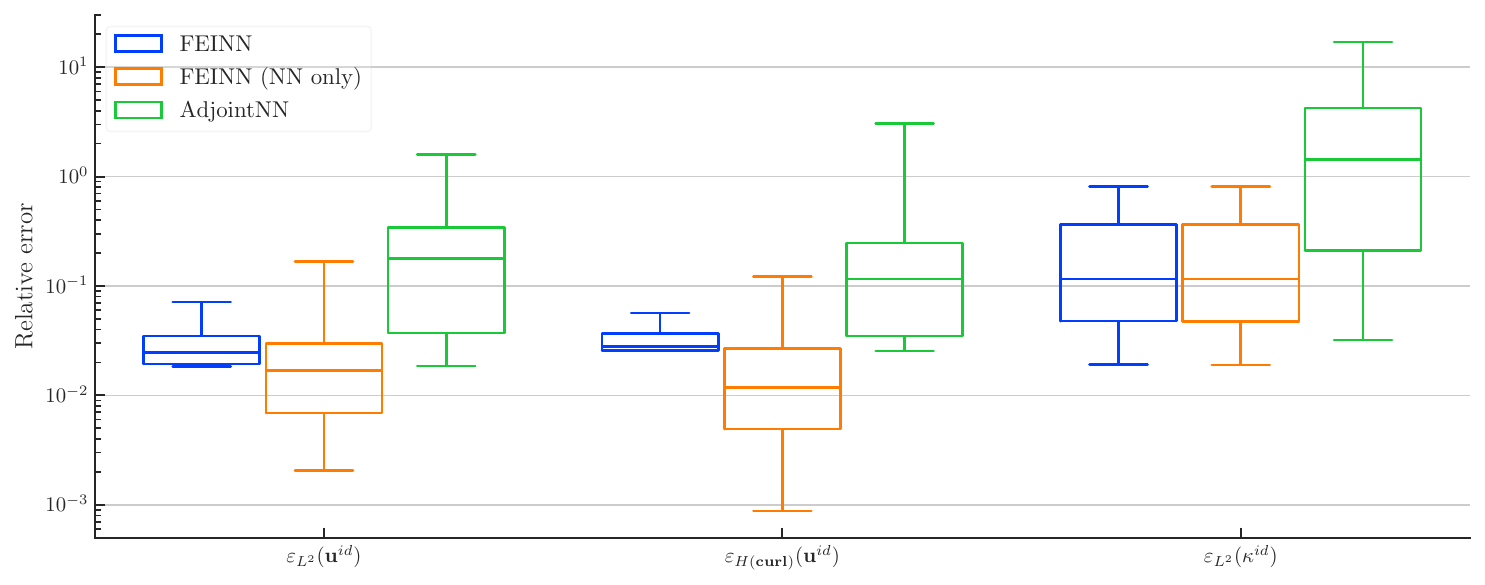}
    \caption{Comparison among \acp{feinn} and adjoint \acp{nn} in terms of relative errors (depicted using box plots) with 100 different \ac{nn} initialisations for the inverse Maxwell problem with partial observations.}
    \label{fig:inv_partial_obs_methods_cmp_boxplot}
\end{figure}

\subsubsection{Noisy observations} \label{subsubsec:inv_noisy_obs}
We consider next an alternative inverse Maxwell problem that now involves noisy observations of the state $\mathbf{u}$. Unlike the previous experiment, we have access to both components of the data, but they are contaminated with Gaussian noise $\epsilon \sim N(0, 0.05^2)$.
Note that each observation is computed by adding Gaussian noise to the true state, i.e., $\mathbf{u}^{\mathrm{obs}} = (1 + \epsilon) \mathbf{u}$. We generate $30^2$ noisy observations distributed uniformly in $[0.005,0.995]^2$.
The domain, boundary conditions, and analytical state are the same as in \sect{subsubsec:inv_partial_obs}, while the analytical coefficient is 
\begin{equation*}
    \kappa(x, y) = \frac{1}{1 + x^2 + y^2 + (x - 1)^2 + (y - 1)^2}.
\end{equation*}
Regarding the \ac{nn} structures, the only change is an increase in the number of neurons in $\kappa_\mathcal{N}$ from 20 to 50. For compatible \acp{feinn}, the training iterations are set to $[150,50,2 \times 400]$, with penalty coefficients $[0.01,0.03]$. The total training iterations for adjoint \acp{nn} are also set to 1,000.

We juxtapose the true solutions, non-interpolated \ac{nn} solutions, and their corresponding errors in \fig{fig:inv_noisy_obs_results}. Note that we use the same analytic state as in the previous experiment; therefore, instead of presenting the state itself, we depict its curl. Additionally, we observe that the \ac{nn} solutions for the state consistently outperform their interpolations. Hence, we only present the non-interpolated \ac{nn} solutions in the figure.
Let us first discuss the state solutions. Since $\mathbf{u}_\mathcal{N} \in \mathcal{C}^\infty(\bar{\Omega})$, $\pmb{\nabla} \times \mathbf{u}_\mathcal{N}$ is also smooth, as confirmed in \fig{fig:inv_noisy_obs_nn_curl_state}. However, upon interpolation onto an edge \ac{fe} space with $\mathcal{C}^0$ normal components, we expect observing discontinuities at the element boundaries in the curl of the interpolated state solution, similar to those displayed in \fig{fig:fwd_darcy_feinn_div_flux}. 
The smoothness of $\mathbf{u}_\mathcal{N}$ enhances the accuracy of the state \ac{nn} solutions in terms of curl error.
In this experiment, the relative curl error of the interpolated $u_\mathcal{N}$ is $2.5\times10^{-2}$, while $u_\mathcal{N}$ itself achieves a lower relative curl error of $1.5\times10^{-3}$.
For the coefficient, the \ac{nn} solution (\fig{fig:inv_noisy_obs_nn_coeff}) successfully identifies the pattern of the true coefficient (\fig{fig:inv_noisy_obs_true_coeff}), with the pointwise error (\fig{fig:inv_noisy_obs_nn_coeff_error}) below 0.05 across most of the domain.

\begin{figure}[ht]
    \centering
    \begin{subfigure}[t]{0.32\textwidth}
        \includegraphics[width=\textwidth]{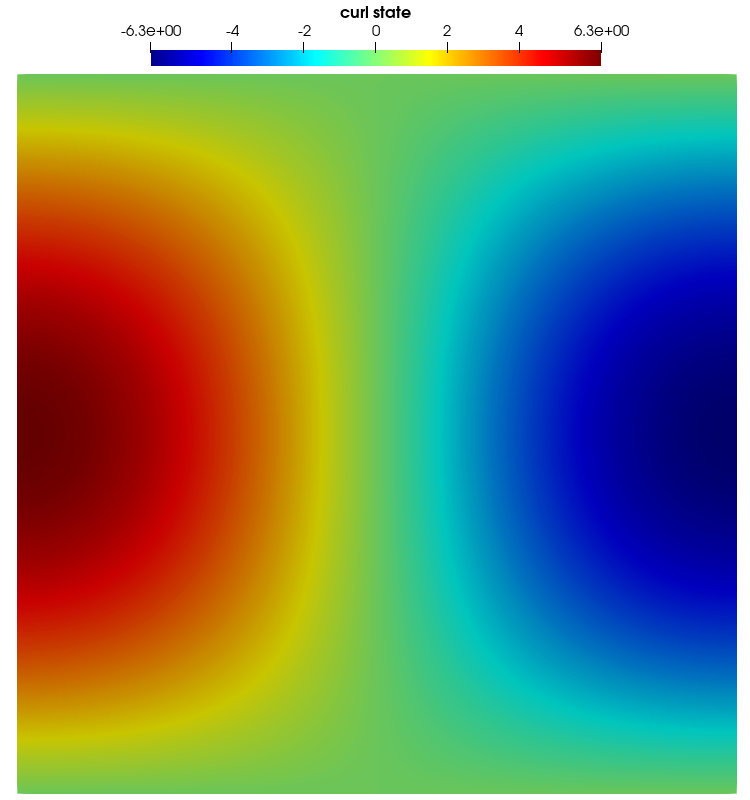}
        \caption{$\pmb{\nabla} \times \mathbf{u}$}
        \label{fig:inv_noisy_obs_true_curl_state}
    \end{subfigure}
    \begin{subfigure}[t]{0.32\textwidth}
        \includegraphics[width=\textwidth]{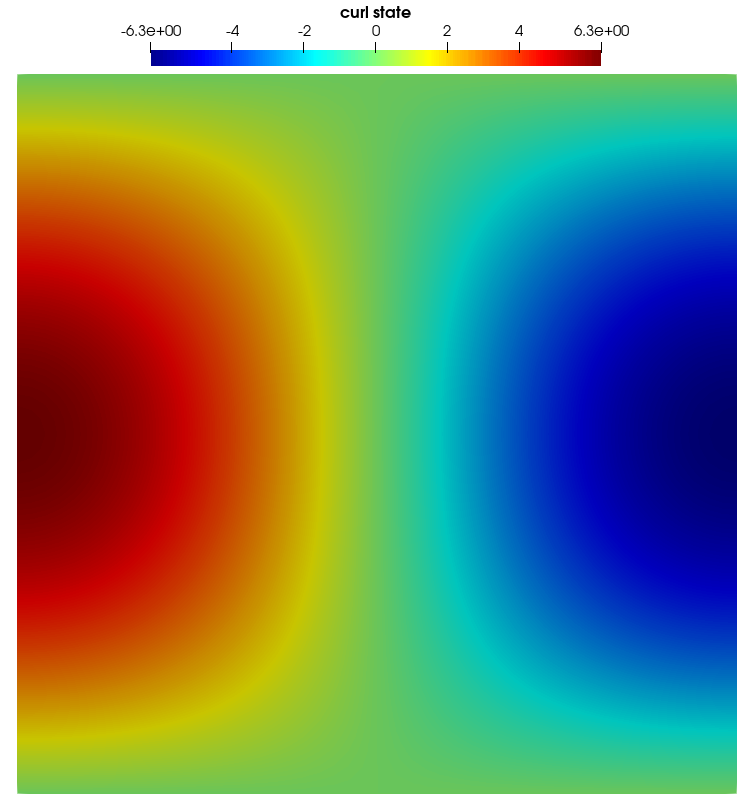}
        \caption{$\pmb{\nabla} \times \mathbf{u}^{id}$}
        \label{fig:inv_noisy_obs_nn_curl_state}
    \end{subfigure}
    \begin{subfigure}[t]{0.32\textwidth}
        \includegraphics[width=\textwidth]{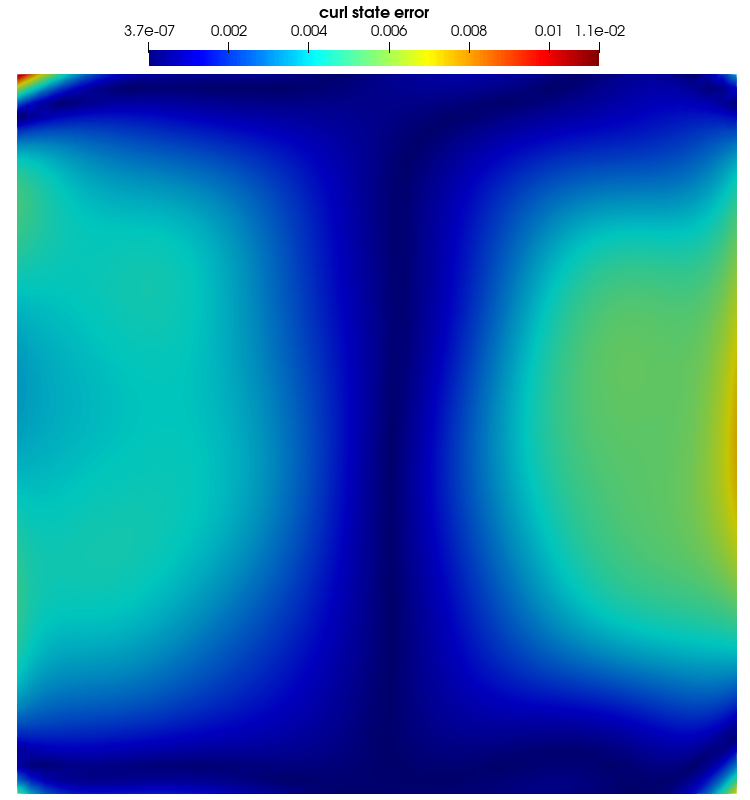}
        \caption{$|\pmb{\nabla} \times (\mathbf{u}^{id} - \mathbf{u})|$}
        \label{fig:inv_noisy_obs_nn_curl_state_error}
    \end{subfigure}

    \centering
    \begin{subfigure}[t]{0.32\textwidth}
        \includegraphics[width=\textwidth]{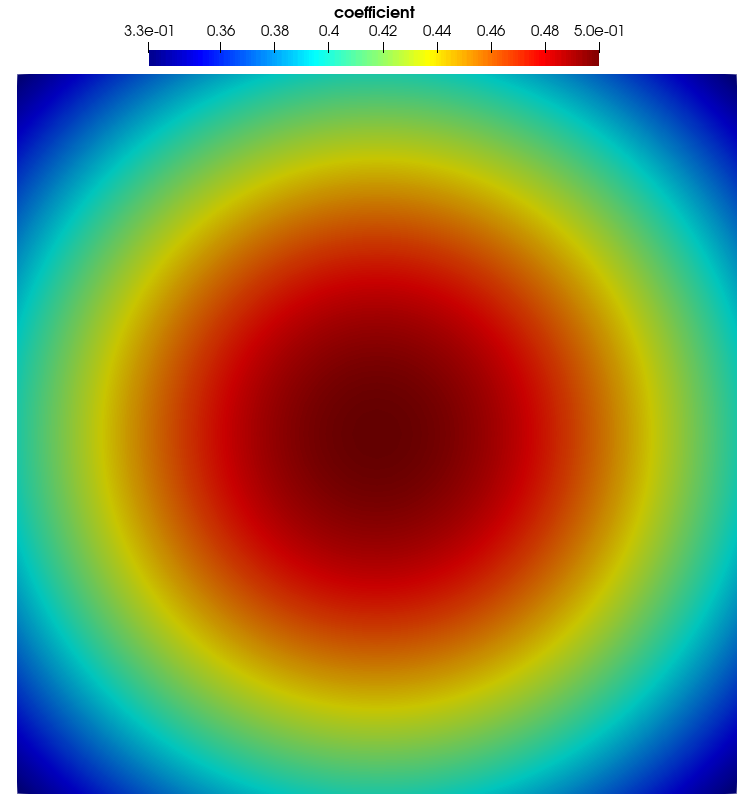}
        \caption{$\kappa$}
        \label{fig:inv_noisy_obs_true_coeff}
    \end{subfigure}
    \begin{subfigure}[t]{0.32\textwidth}
        \includegraphics[width=\textwidth]{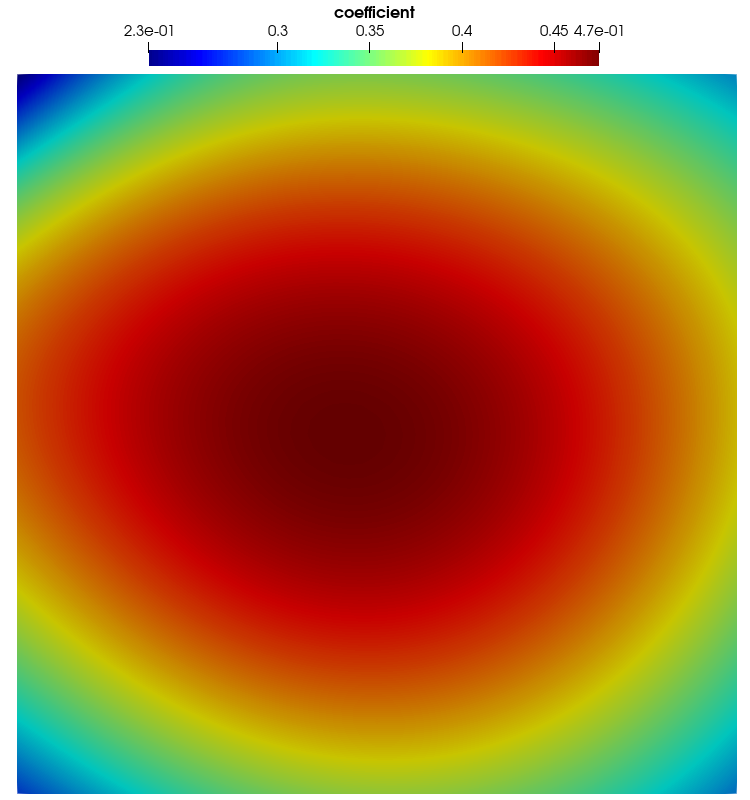}
        \caption{$\kappa^{id}$}
        \label{fig:inv_noisy_obs_nn_coeff}
    \end{subfigure}
    \begin{subfigure}[t]{0.32\textwidth}
        \includegraphics[width=\textwidth]{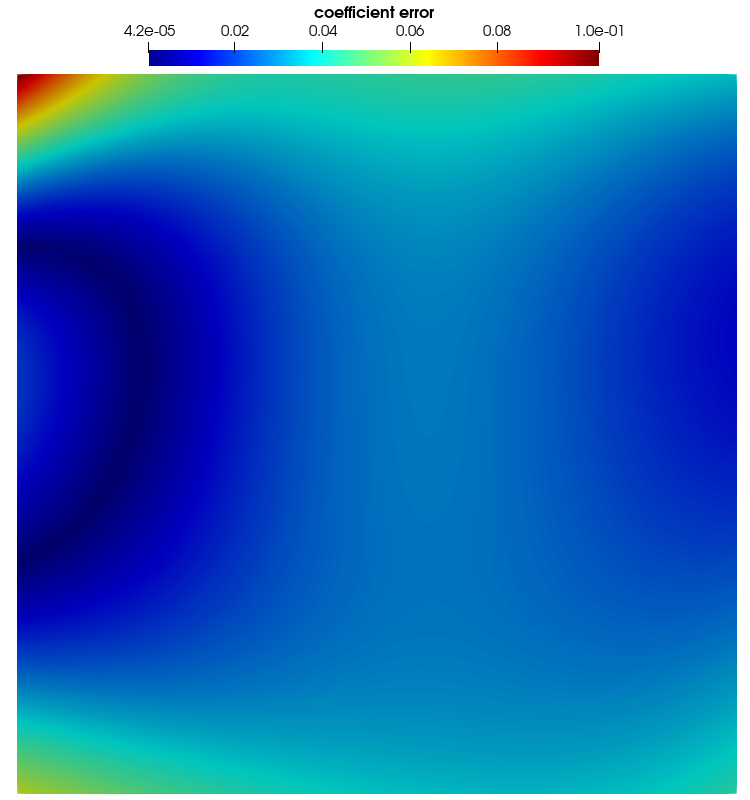}
        \caption{$|\kappa^{id} - \kappa|$}
        \label{fig:inv_noisy_obs_nn_coeff_error}
    \end{subfigure}
     
    \caption{Illustration of known analytical solutions (first column), compatible \ac{feinn} solutions (second column), and corresponding point-wise errors (third column) for the inverse Maxwell problem with noisy observations. Results correspond to an experiment with a specific \ac{nn} initialisation. The first row depicts the curl of the state, while the second row represents the coefficient.}
    \label{fig:inv_noisy_obs_results}
\end{figure}

The training error histories of both \acp{feinn} and adjoint \acp{nn} are presented in \fig{fig:inv_noisy_obs_training_history}. The label tag ``no reg'' for the adjoint \ac{nn} method denotes the history without regularisation. 
It is evident that, although the adjoint \ac{nn} method converges faster than the \ac{feinn} method, it requires explicit regularisation to stabilise the training. We adopt the same $L^1$ regularisation on the parameters of $\kappa_\mathcal{N}$ as in~\cite{Mitusch2021}, with a coefficient $10^{-3}$. 
One advantage of the \ac{feinn} method is that no regularisation is needed to achieve a stable training even though regularisation could help to make the problem better posed and improve convergence.
Moreover, the non-interpolated \acp{nn} have the potential for better state accuracy due to the smoothness of $\mathbf{u}_\mathcal{N}$, as the errors of $\mathbf{u}_\mathcal{N}$ fall below the adjoint \ac{nn} errors after 300 iterations.

\begin{figure}[ht]
    \centering
    \includegraphics[width=\textwidth]{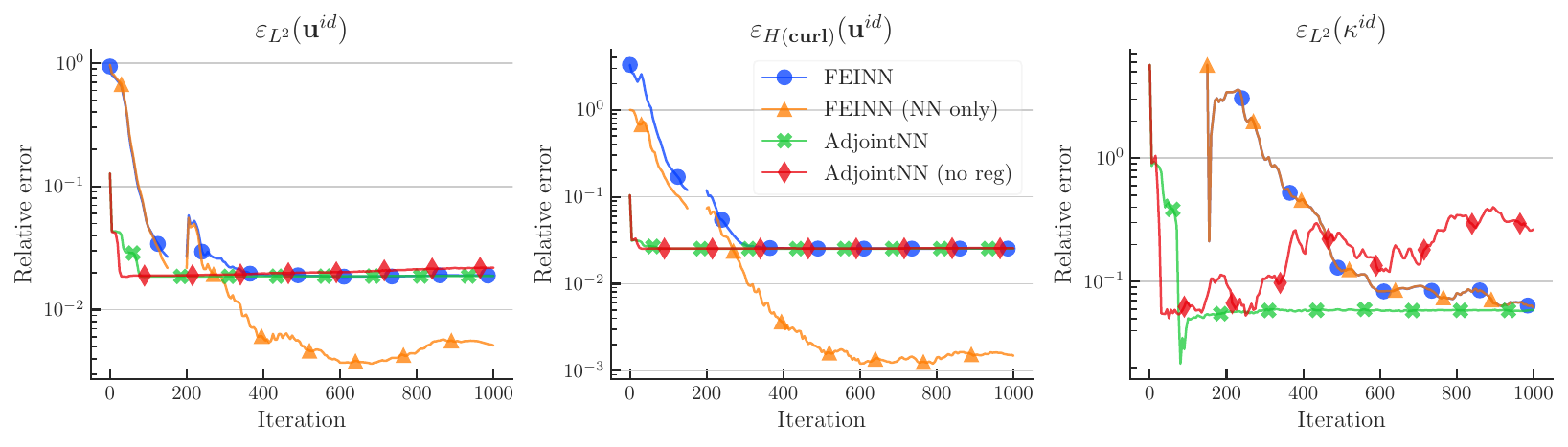}
    \caption{Comparison among \acp{feinn} and adjoint \acp{nn} in terms of relative errors during training for the inverse Maxwell problem with noisy observations. Both optimisation loops were run for 1,000 iterations.}
    \label{fig:inv_noisy_obs_training_history}
\end{figure}

To study the robustness of the methods, we repeat the experiment 100 times with different \ac{nn} initialisations. The resulting box plots for the relative errors are presented on the left side of \fig{fig:inv_noisy_obs_boxplots}. 
For the identified states, both \acp{feinn} and adjoint \acp{nn} exhibit stable and similar performance, as evidenced by the boxes degenerating into lines and being positioned at the same level.
However, the \ac{feinn} method has more potential than the adjoint \ac{nn} method to achieve better accuracy, as the non-interpolated \ac{nn} boxes are noticeably lower than the adjoint \ac{nn} boxes. 
For the coefficient, the adjoint \ac{nn} method is more stable than the \ac{feinn} method, with a more compact box plot. The \ac{feinn} method also yields highly accurate results: the upper quartile (Q3) line of the \ac{feinn} coefficient box is below 9\%, and a similar trend is observed for \acp{nn} without interpolation. 
In summary, the \ac{feinn} method demonstrates comparable performance to the adjoint \ac{nn} method in identifying unknown coefficient, and it demonstrates greater potential in fully recovering the state with noisy data.

\begin{figure}[ht]
    \centering
    \includegraphics[width=\textwidth]{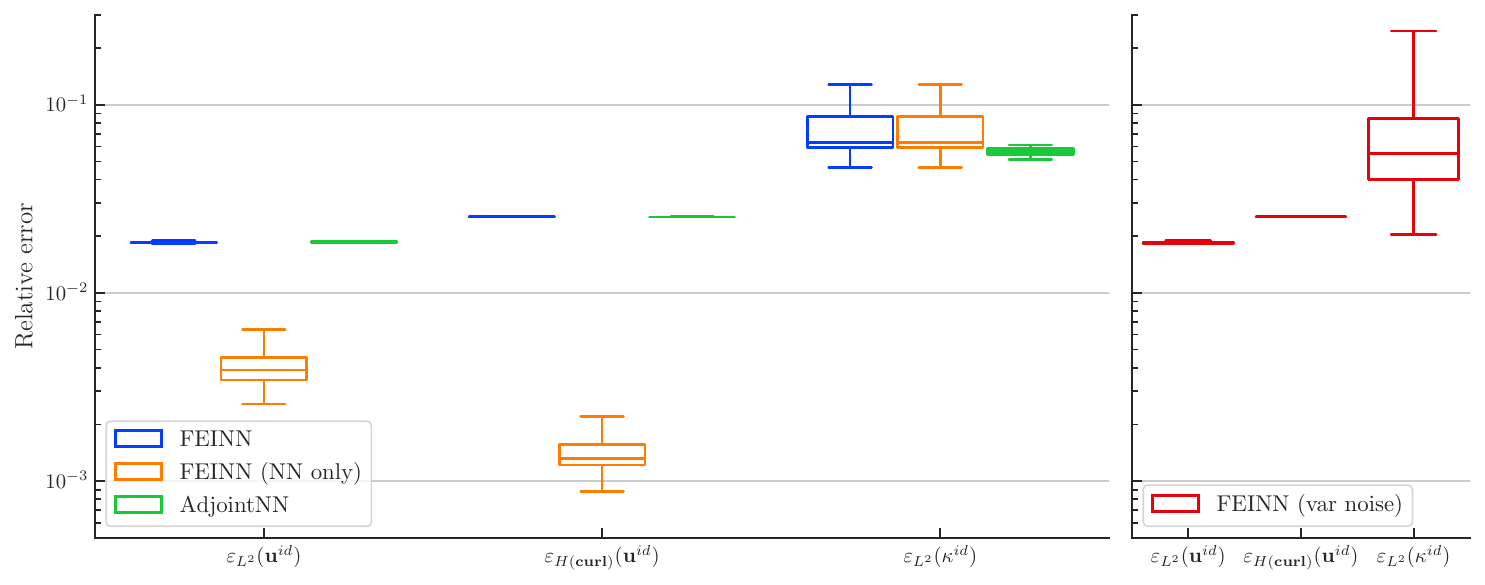}
    \caption{Box plots from 100 experiments for the inverse Maxwell problem with noisy observations. Left: comparison among \acp{feinn} and adjoint \acp{nn} with regularisation in terms of relative errors with different \ac{nn} initialisations. Right: sensitivity analysis of \acp{feinn} to Gaussian noise in observations, generated using different random seeds.}
    \label{fig:inv_noisy_obs_boxplots} 
\end{figure}

In addition to \ac{nn} initialisations, another source of randomness in this problem is the Gaussian noise in the observations. To assess the sensitivity of \acp{feinn} to this noise, we generate 100 different sets of noisy observations using different random seeds and repeat the experiment with the same set of \ac{nn} initialisations.
The resulting box plots for the relative errors are displayed on the right side of \fig{fig:inv_noisy_obs_boxplots}, conforming the robustness of the \ac{feinn} method to random noise. The state boxes are mostly flat, while the Q3 value of the coefficient box remains below 9\%.

\subsubsection{Boundary observations} \label{subsubsec:inv_boundary_obs}
We conclude the inverse experiments section by attacking a boundary value inverse Maxwell problem. This scenario is particularly challenging as state observations are available only around the boundary, while the state in the interior domain is unknown. To add more complexity, we assume that the unknown coefficient $\kappa$ is discontinuous across the domain $\Omega = [0,1]^2$.
The analytical solutions are as follows:
\begin{equation*}
  \mathbf{u}(x, y) = \begin{bmatrix} \cos(\pi x)\cos(y) \\ \sin(x)\sin(\pi y) \end{bmatrix}, \quad \kappa(x, y) = \begin{cases}
    1, & \text{ if } y > 2x\\
    10,& \text{ if } y \leq 2x\\
  \end{cases}.
\end{equation*}
The curl of the analytic state and the discontinuous $\kappa$ are depicted in \fig{fig:inv_boundary_obs_true_curl_state} and \fig{fig:inv_boundary_obs_true_coeff}, respectively. 
A total of $4\times70$ boundary observations are generated by evaluating the true state along the interior side of the boundary. Their locations are marked as purple dots in \fig{fig:inv_boundary_obs_true_curl_state}.

\begin{figure}[ht]
  \centering
  \begin{subfigure}[t]{0.32\textwidth}
    \includegraphics[width=\textwidth]{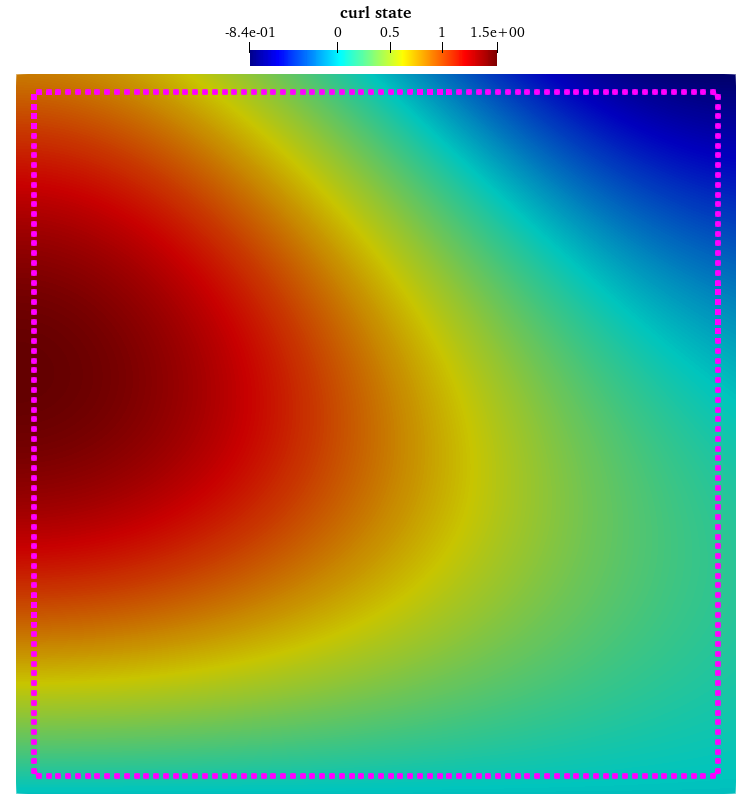}
    \caption{$\pmb{\nabla} \times \mathbf{u}$}
    \label{fig:inv_boundary_obs_true_curl_state}
  \end{subfigure}
  \begin{subfigure}[t]{0.32\textwidth}
    \includegraphics[width=\textwidth]{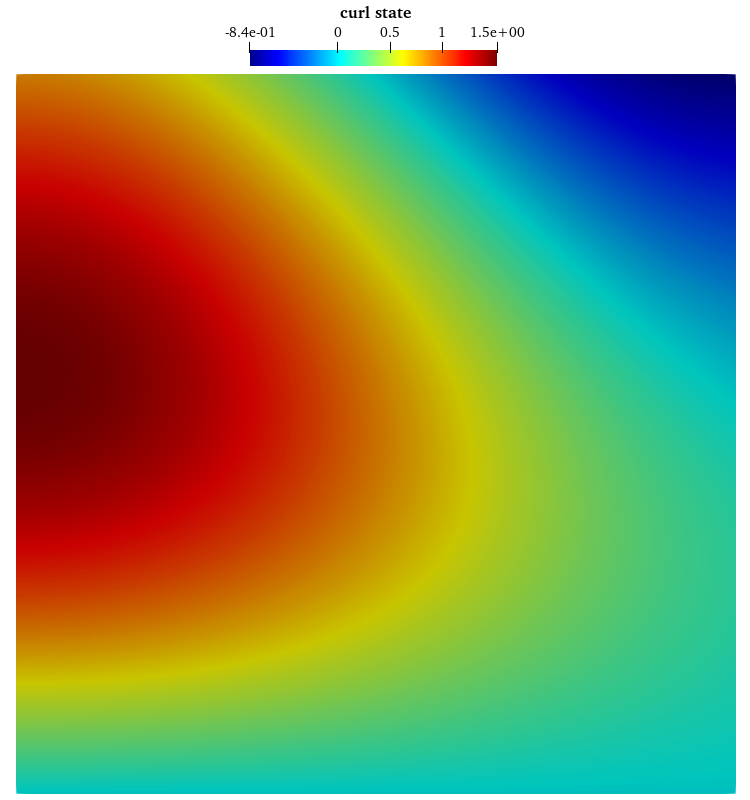}
    \caption{$\pmb{\nabla} \times \mathbf{u}^{id}$}
    \label{fig:inv_boundary_obs_nn_curl_state}
  \end{subfigure}
  \begin{subfigure}[t]{0.32\textwidth}
    \includegraphics[width=\textwidth]{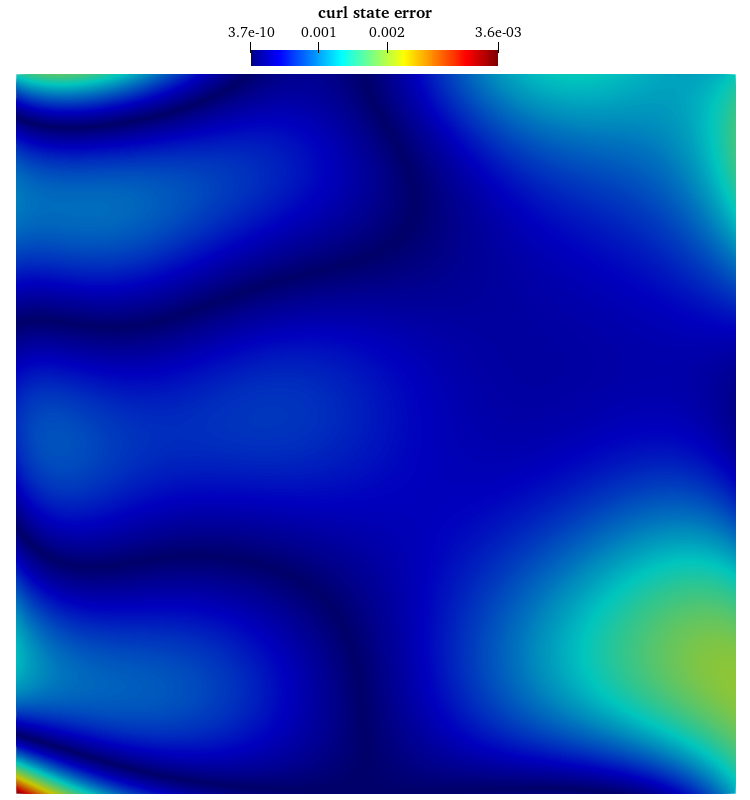}
    \caption{$|\pmb{\nabla} \times (\mathbf{u}^{id} - \mathbf{u})|$}
    \label{fig:inv_boundary_obs_nn_curl_state_error}
  \end{subfigure}

  \centering
  \begin{subfigure}[t]{0.32\textwidth}
      \includegraphics[width=\textwidth]{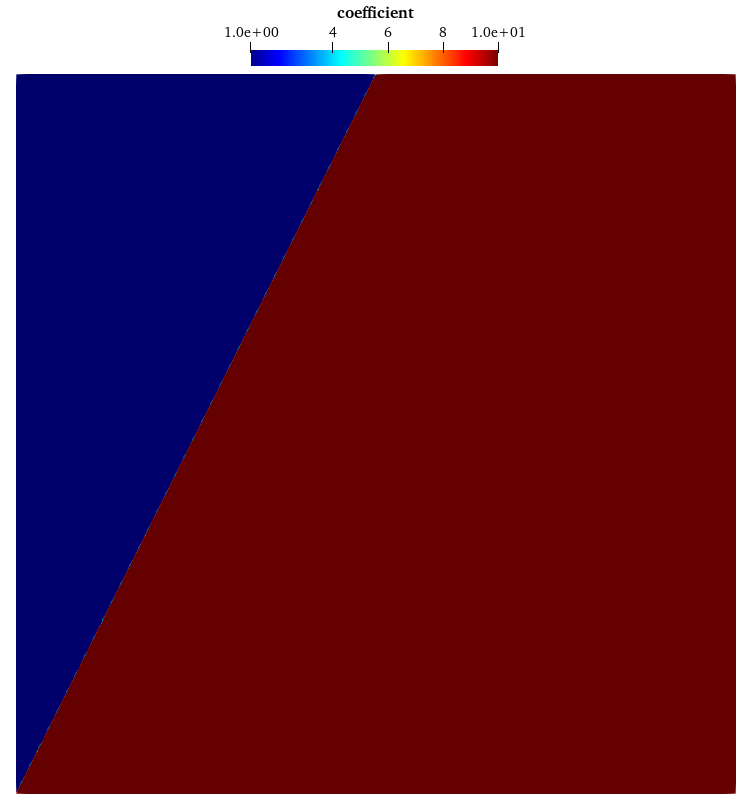}
      \caption{$\kappa$}
      \label{fig:inv_boundary_obs_true_coeff}
  \end{subfigure}
  \begin{subfigure}[t]{0.32\textwidth}
    \includegraphics[width=\textwidth]{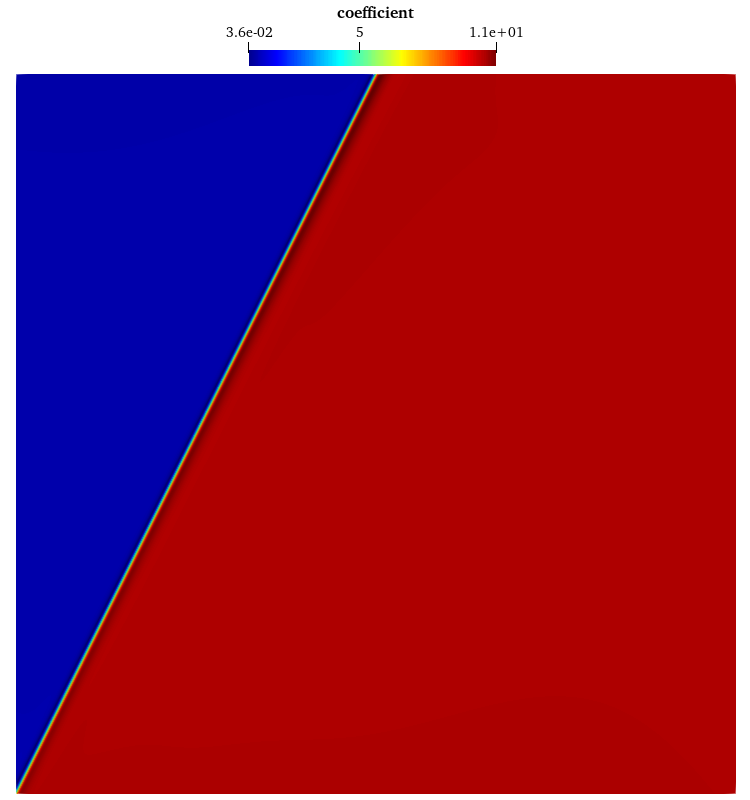}
    \caption{$\kappa^{id}$}
    \label{fig:inv_boundary_obs_nn_coeff}
  \end{subfigure}
  \begin{subfigure}[t]{0.32\textwidth}
    \includegraphics[width=\textwidth]{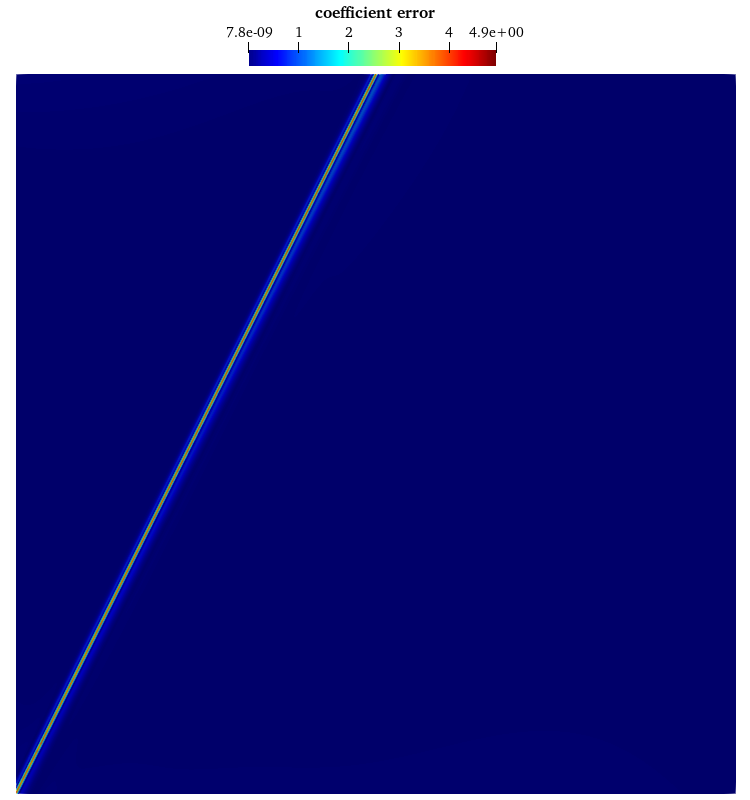}
    \caption{$|\kappa^{id} - \kappa|$}
    \label{fig:inv_boundary_obs_nn_coeff_error}
  \end{subfigure}
    
  \caption{Illustration of analytical and \ac{nn} solutions and the  errors for the inverse Maxwell problem with boundary observations. The purple dots in (a) represent the observed data. See caption of \fig{fig:inv_noisy_obs_results} for details on the information being displayed in this figure.}
  \label{fig:inv_boundary_obs_results}
\end{figure}

In this experiment, we focus exclusively on the compatible \ac{feinn} approach. Since the Dirichlet boundary condition is presented as boundary observations, the interpolation space for $\mathbf{u}_{\mathcal{N}}$ is taken as $U^1_h$ rather than $U^1_{h,0}$ in the previous experiments. To reflect this change in the \ac{fe} space, the loss~\eqref{eq:inverse_loss} is modified as follows:
\begin{equation*}
  \mathscr{L}(\pmb{\kappa}_\mathcal{N}, \mathbf{u}_\mathcal{N}) \doteq \norm{\mathbf{d} - \mathcal{D}_h (\pi_{h}^1(\mathbf{u}_\mathcal{N}))}_{\ell^2} + \alpha \norm{\tilde{\mathcal{R}}_h(\pi^0_h(\pmb{\kappa}_\mathcal{N}), \pi^1_{h}(\mathbf{u}_\mathcal{N}))}_{{V^1}'}, 
\end{equation*}
where $\tilde{\mathcal{R}}(\kappa, \mathbf{u}, \mathbf{v}) = a(\kappa, \mathbf{u}, \mathbf{v}) - \ell(\mathbf{v}) - \int_{\partial \Omega}(\mathbf{v} \times \mathbf{n}) \cdot (\pmb{\nabla} \times \mathbf{u})$ is the modified \ac{pde} residual. The boundary integral term in $\tilde{\mathcal{R}}$ arises from integration by parts and the fact that $\mathbf{v}\times\mathbf{n}$ is not necessarily zero on $\partial \Omega$.

The \ac{nn} structures for $\mathbf{u}_{\mathcal{N}}$ and $\kappa_{\mathcal{N}}$ remain the same as in the noisy observations experiment. The training iterations and penalty coefficients are set to $[150,50,3\times 400]$ and $[0.001,0.003, 0.009]$, respectively. The mesh consist of $100\times100$ squares, and the interpolation space orders are consistent with those in the earlier inverse experiments.
We present the trained \ac{nn} solutions and their errors in the last two columns of \fig{fig:inv_boundary_obs_results}. 
Despite having state observations only around the boundary, the curl of the \ac{nn} state solution produced by our method are visually indistinguishable from the curl of the true state, as evident from the comparison of \fig{fig:inv_boundary_obs_nn_curl_state} and \fig{fig:inv_boundary_obs_true_curl_state}. 
Moreover, the identified \ac{nn} coefficient as shown in \fig{fig:inv_boundary_obs_nn_coeff} successfully captures the jumps of the true coefficient. The pointwise error of the coefficient, depicted in \fig{fig:inv_boundary_obs_nn_coeff_error}, is small in most regions of the domain, with slightly higher errors near the discontinuity.

\fig{fig:inv_boundary_obs_training_history} illustrates the training error histories for the compatible \ac{feinn} method. All relative errors decrease steadily during training and converge to low levels.
Additionally, after a few hundred iterations, $\mathbf{u}_\mathcal{N}$ errors consistently fall below those of its interpolation counterpart in both $L^2$ and $H(\bcurl)$ norms. 
Overall, the training curves confirm the reliability of our 3-step training strategy and the potential for $\mathbf{u}_\mathcal{N}$ to achieve higher accuracy than its \ac{fe} interpolation when trained with boundary observations.
 
\begin{figure}[ht]
  \centering
  \includegraphics[width=\textwidth]{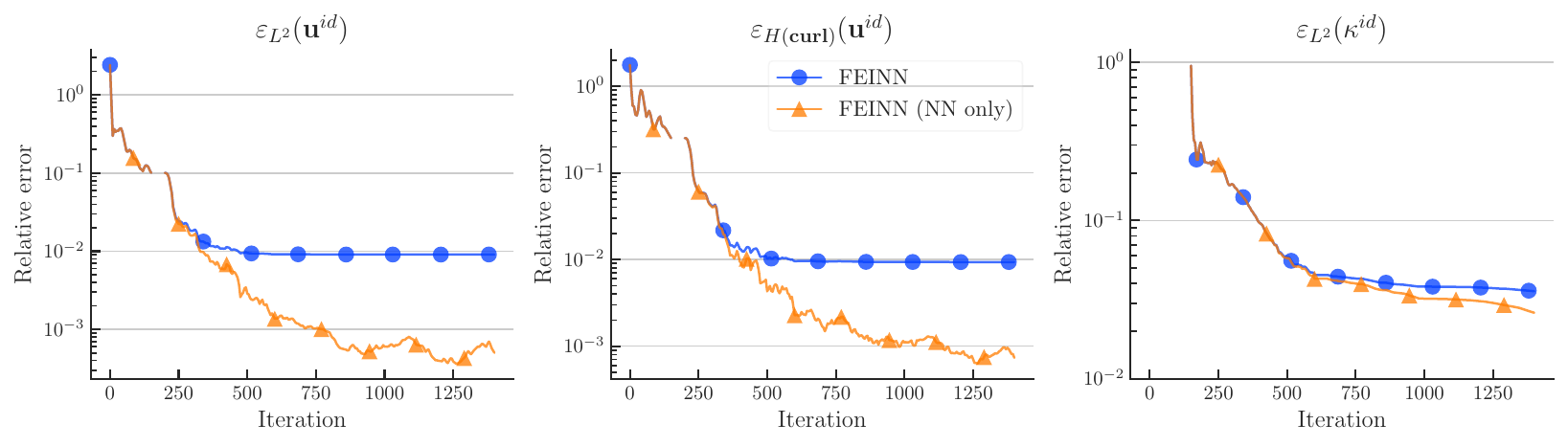}
  \caption{Relative errors during training by the compatible \ac{feinn} method for the inverse Maxwell problem with boundary observations.}
  \label{fig:inv_boundary_obs_training_history}
\end{figure}

Similar to previous experiments, we conduct 100 trials with different \ac{nn} initial parameters to assess our method's robustness in the boundary observations case. The resulting box plots for the relative errors are presented in \fig{fig:inv_boundary_obs_boxplots}. 
For the state, the interpolated \acp{nn} are highly reliable and accurate, with line-like error boxes positioned at 1\% relative errors in both $L^2$ and $H(\bcurl)$ norms. Although non-interpolated \ac{nn} solutions result in slightly more scattered boxes, they demonstrate greater accuracy than their their interpolation counterparts. This is reflected in the upper whisker lines for the $L^2$ and $H(\bcurl)$ relative errors, which are below 0.2\% and 0.3\%, respectively.
Regarding the coefficient, the \acp{nn} achieve a 1\% error reduction compared to their interpolations, with most $\kappa_{\mathcal{N}}$ errors around 3\%. These coefficient error boxes are nearly flat, with an interquartile range of less than 1\%. These results highlight our method's robustness in recovering both the state and discontinuous coefficient of the inverse Maxwell problem using only boundary observations.

\begin{figure}[ht]
  \centering
  \includegraphics[width=0.65\textwidth]{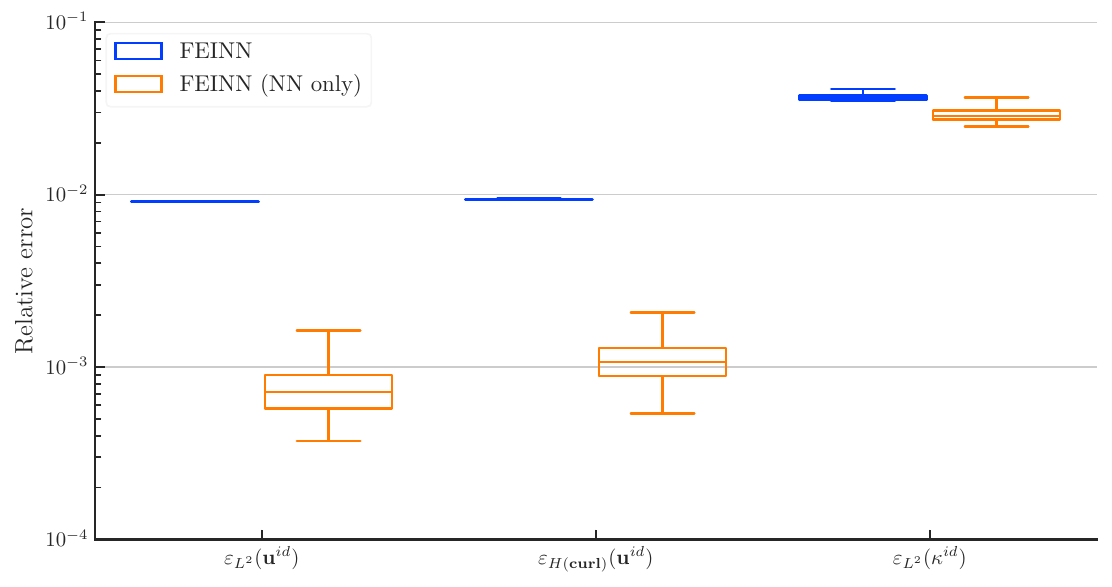}
  \caption{Box plots from 100 experiments for the inverse Maxwell problem with boundary observations.}
  \label{fig:inv_boundary_obs_boxplots} 
\end{figure}

\section{Conclusions} \label{sec:conclusions}
In this work, we extended the \ac{feinn} method~\cite{Badia2024} to solve vector-valued \acp{pde} with weak solutions in $H(\bcurl)$ or $H(\bdiv)$ spaces by interpolating \acp{nn} onto compatible \ac{fe} spaces, i.e., structure-preserving spaces that satisfy a discrete de Rham subcomplex, such as N{\'e}d{\'e}lec and \ac{rt} spaces. We also propose an extension, trace \acp{feinn}, to solve surface Darcy equations by interpolating \acp{nn} onto \ac{fe} spaces defined on the discrete manifold. The compatible \ac{feinn} method retains the advantages of standard \acp{feinn}, such as exact integration, seamless  imposition of strong boundary conditions, and a solid mathematical foundation. Additionally, compatible \acp{feinn} can be easily adapted to inverse problems by adding the data misfit error to the loss function and introducing new \acp{nn} to approximate the unknown physical coefficients.

To evaluate the performance of the methods, we conducted experiments on both forward and inverse problems. Specifically, in the forward Maxwell problem, we analysed how compatible \acp{feinn} perform across different mesh sizes and trial \ac{fe} space orders. To showcase the applicability of compatible \acp{feinn} to surface \acp{pde}, we tackled a forward Darcy problem on the unit sphere.
In general, we observe that using lower-order test bases allows \acp{nn} to beat \ac{fem} solutions in terms of $L^2$ and $H(\bcurl)$ errors for problems with a smooth analytic solution, achieving over two orders of magnitude improvement over \ac{fe} solutions. In addition, interpolated \acp{nn} match the \ac{fe} solutions in all cases. Preconditioning of \acp{nn} (i.e., using dual residual norms in the loss function) improves the stability and efficiency of the training process. Our findings also suggest that the strong \ac{pde} residual with \acp{nn} effectively guides mesh refinement during adaptive training, resulting in \acp{nn} that are more accurate than or comparable to \ac{fem} solutions. For inverse problems, we compare the compatible \ac{feinn} method with the adjoint \ac{nn} method for the inverse Maxwell problem with partial or noisy observations. Compatible \acp{feinn} are more accurate than adjoint \acp{nn} in most cases, do not require explicit regularisation and are robust against random noise in the observations. Besides, the \ac{feinn} method also achieves high accuracy in both state and coefficient for the inverse problem with boundary observations.

Even though \acp{feinn} have been shown to be more accurate and stable than other \ac{pinn}-like methods~\cite{Badia2024,Badia2024adaptive,}, the cost of the training is still high. For forward problems, despite the gains in accuracy compared to \ac{fem} on the same mesh, computational cost is higher than \acp{fem}  with similar accuracy in general. Adaptive and inverse problems improve the efficiency of the method compared to standard approaches, since nested loops are not required and \ac{nn} built-in regularisation can be exploited. We also expect these schemes to be more effective for transient simulations, where the \ac{nn} can be initialised with the previous time step training. In the future, the design of novel nonconvex optimisation strategies for \ac{nn} approximation of \acp{pde}, e.g., using multigrid and domain decomposition-like techniques, will be critical to reduce the computational cost of training of \ac{pinn}-like strategies and efficiently exploit their nonlinear approximability properties.

\section*{Acknowledgments}
This research was partially funded by the Australian Government through the Australian Research Council (project numbers DP210103092 and DP220103160). This work was also supported by computational resources provided by the Australian Government through NCI under the NCMAS and ANU Merit Allocation Schemes. 
W. Li gratefully acknowledges the Monash Graduate Scholarship from Monash University, the NCI computing resources provided by Monash eResearch through Monash NCI scheme for HPC services, and the support from the Laboratory for Turbulence Research in Aerospace and Combustion (LTRAC) at Monash University through the use of their HPC Clusters.

\section*{Declaration of generative AI and AI-assisted technologies in the writing process}
During the preparation of this work the authors used ChatGPT in order to improve language and readability. After using this tool/service, the authors reviewed and edited the content as needed and take full responsibility for the content of the publication.

\printbibliography

\end{document}